\documentclass[11pt,a4paper]{amsart}

\usepackage[top=3cm,bottom=3cm,left=3.2cm,right=3.2cm,headsep=10pt,letterpaper]{geometry}
\usepackage[utf8]{inputenc}
\usepackage[english]{babel}
\usepackage{bm} 
\usepackage{graphicx}
\usepackage{subcaption}
\usepackage{mathtools}
\mathtoolsset{showonlyrefs} 
\usepackage{amsmath}
\usepackage{amsthm,amstext,amssymb,mathtools,esint,tabularx}
\usepackage{caption}

\newcommand{\xx}{\mathbf{x}}

\usepackage{tikz,pgfplots,pgfplotstable} 
\pgfplotsset{compat=newest}
\usetikzlibrary{calc,3d,arrows,patterns,decorations}
\tikzset{>=stealth'}
\usetikzlibrary{positioning} 
\usepackage[abs]{overpic}
\usepackage[outline]{contour}
\contourlength{0.6pt}
\usetikzlibrary{decorations.text}
\tikzset{
  text deco/.style={postaction={decorate, decoration={text along path,#1}}}}
\usepackage{yfonts}


\usepackage{ifpdf} \ifpdf \DeclareGraphicsRule{*}{mps}{*}{} \fi 

\numberwithin{equation}{section}

\newtheorem{thm}{Theorem}
\newtheorem{lem}[thm]{Lemma}
\newtheorem{rem}[thm]{Remark}

\newtheorem{deftion}[thm]{Definition}

\usepackage{amsaddr}
\begin{document}

\title{Energy-corrected FEM and explicit time-stepping for parabolic problems}
\author{P.~Swierczynski \and B~Wohlmuth}
\address{Institute for Numerical Mathematics,\\Technische Universit\"{a}t M\"{u}nchen,\\85748 Garching b. M\"{u}nchen, Germany}
\email{piotr.swierczynski@ma.tum.de, wohlmuth@ma.tum.de}

\keywords{Corner singularities, second-order parabolic equations, energy-corrected FEM}

\begin{abstract}
The presence of corners in the computational domain, in general, reduces the regularity of solutions of parabolic problems and diminishes the convergence properties of the finite element approximation introducing a so-called "pollution effect". Standard remedies based on mesh refinement around the singular corner result in very restrictive stability requirements on the time-step size when explicit time integration is applied. In this article, we introduce and analyse the energy-corrected finite element method for parabolic problems, which works on quasi-uniform meshes, and, based on it, create fast explicit time discretisation. We illustrate these results with extensive numerical investigations not only confirming the theoretical results but also showing the flexibility of the method, which can be applied in the presence of multiple singular corners and a three-dimensional setting. We also propose a fast explicit time-stepping scheme based on a piecewise cubic energy-corrected discretisation in space completed with mass-lumping techniques and numerically verify its efficiency.
\end{abstract}
\maketitle

\section{Introduction}\label{Sec:intro}

Numerical approximations of parabolic problems have been extensively studied in multiple settings and are of great interest due to the many technical applications, in which they appear~\cite{Cengel2014,Lienhard2017,Price1970}. Standard approximation methods involve finite differences~\cite{Lewy1928,Thomee1990}, but to allow for computations on more complicated domains, the theory of finite element methods has been developed, among many others, in~\cite{Douglas1970, Fix1972,Wheeler1973}. For a more exhaustive discussion of the standard finite element approximations of parabolic problems and an extensive list of references, we refer the reader to~\cite{Thomee2006}.

The presence of corners in the computational domain negatively influences regularity properties of the solutions of elliptic~\cite{Grisvard1985} and parabolic problems~\cite{Banasiak1991,Grisvard1985}, due to the appearance of certain known singular functions. For corners with angles $\Theta > \pi$, in general, $H^2$ regularity in space cannot be guaranteed. This introduces the so-called pollution effect, diminishing the convergence order of the standard finite element schemes both for elliptic~\cite{Blum1990,Blum1982} and parabolic problems~\cite{Chatzipantelidis2006}. Standard methods for improving the approximation properties of the finite element method in the elliptic setting include refinement~\cite{Babuska1970,Babuska2001} and grading~\cite{Apel2001,Apel1996,Schatz1979} of the mesh around the singular corner. These results can also be extended to parabolic problems on non-convex polygonal domains~\cite{Chatzipantelidis2006}. However, due to the very small mesh size in the vicinity of the re-entrant corner, the CFL condition~\cite{Lewy1928}, which guarantees the stability of an explicit time integration scheme, becomes very restrictive. This, in turn, means that explicit time-stepping schemes cannot be efficiently used, as they would require prohibitively small time-steps.

In this article, we follow a different approach, based on the so-called energy corrected finite element method. The idea was originally proposed for finite difference schemes in~\cite{Ru89,RZ86,Gietl1978}. Recently, it has been extended to cover finite element methods for the Poisson equation in~\cite{Egger2014,Huber2015,Ruede2014,Swierczynski2018}, the Stokes equation in~\cite{John2015} and optimal control problems in~\cite{John2018}. The energy-correction method eliminates the pollution effect from the finite element approximation by a scaling of a fixed number of entries in the stiffness matrix. As opposed to adaptivity and grading, it can be successfully applied on quasi-uniform meshes, and hence, does not suffer from a too severe CFL condition. This permits the construction of fast explicit time-stepping schemes combined with the energy-corrected finite element in space.

This article is structured as follows: in Section~\ref{Sec:parabolic} we discuss the regularity properties of the parabolic equations and briefly review the main results concerning the energy-correction method. In Section~\ref{Sec:FEM_parabolic}, we generalize the energy-corrected finite element to parabolic problems and provide a complete error analysis. We illustrate the analysis with numerical investigations in Section~\ref{sec:NumericalResults}.
In Section~\ref{sec:Extensions}, we present several potential extensions of the scheme. We show that it can be applied to the advection-diffusion problem with a moderate advection term.  Furthermore, we introduce higher-order energy-corrected discretisation combined with mass-lumping technique and a post-processing approach improving convergence properties of the scheme also in the vicinity of the singular corner. We complete the discretisation with an explicit Runge-Kutta time-stepping and show that the energy-corrected scheme exhibits superior performance compared with other commonly used discretisation methods. Finally, we present a potential application of the method, showing the flexibility of the energy-corrected finite element, which can be applied to problems with multiple re-entrant corners, and in a three-dimensional setting.
\section{Parabolic problem}\label{Sec:parabolic}
Let $\Omega\subset \mathbb{R}^2$ be a bounded, non-convex, polygonal domain, i.e., a domain containing a re-entrant corner of size $\pi < \Theta < 2 \pi$. For the sake of simplicity we shall assume that this domain contains only one such corner. Note however that the analysis presented here also applies in a more general setting of domains with an arbitrary number of re-entrant corners.\\
Consider a standard heat equation defined on~$\Omega$ in a time interval~$[0, T]$ with $T > 0$
\begin{align}\label{eq:HeatEquation}
u_t - \Delta u &= f \quad \text{in } \Omega\times (0, T),\\
u &= 0 \quad \text{on } \partial \Omega\times [0, T],\\
u &= u_0 \quad \text{in } \Omega \text{ at } t = 0 .
\end{align}
We define a corresponding weak solution as a continuous function $u: [0, T] \rightarrow H^1_0(\Omega)$ such that $u(0) = u_0 \; a.e.$  and for all $v \in H^1_0 (\Omega) \text{ and } a.e.\; 0 < t \leq T$
\begin{align}\label{eq:HeatEquationWeakSolution}
\langle u_t (t), v \rangle + a(u(t), v)  &= \langle f, v \rangle
\end{align}
where $a(u(t), v) \coloneqq  \big\langle \nabla u(t), \nabla v \big\rangle$.

The unique solution to this problem exists and its regularity on smooth and convex domains has been studied extensively, see~\cite{Evans2010}. Having smooth initial conditions $u_0\in C^\infty (\overline{\Omega})$ and a smooth source term $f\in C^\infty (\overline{\Omega} \times [0, T])$ the solution of Equation~\eqref{eq:HeatEquation} is also smooth, see~\cite[Chapter 7, Theorem 7]{Evans2010}. However, this does not hold anymore, if polygonal domains are concerned.

\subsection{Definitions and auxiliary results}
We present our analysis in weighted Sobolev spaces as they constitute a convenient framework for the description of elliptic and parabolic problems defined on domains with re-entrant corners. Let $r(\xx)$ be the Euclidean distance of $\xx\in\mathbb{R}^2$ from the re-entrant corner. For any $\beta \in \mathbb{R}$ we define a weighted Sobolev space as the following vector space
\begin{align*}
H^m_\beta (\Omega) \coloneqq \big\lbrace u \text{ - measurable } : \; r^{\beta + |\mu |  - m}D^\mu u \in L^2(\Omega), \; \; 0 \leq |\mu | \leq m \big\rbrace,
\end{align*}
where $\mu$ is a multiindex with non-negative entries. Moreover, we equip this space with a norm
\begin{align*}
\| u \|_{m ,\beta} \coloneqq \bigg( \sum_{|\mu | \leq m} \big\| r^{\beta + |\mu |  - m}D^\mu u \big\|^2_{L^2(\Omega)}  \bigg)^{1/2}.
\end{align*}
For convenience we denote $L^2_\beta(\Omega) \coloneqq H^0_\beta (\Omega)$. We also use the notational conventions $\| u\|_{0, \beta} = \|u \|_{\beta}$ and $\| u\|_{L^2(\Omega)} = \| u \|_0$. Similarly as standard Sobolev spaces, weighted spaces form a natural hierarchy, see~\cite{Kufner1985}
\begin{align}\label{eq:Kufner}
H^{m + l}_{\beta + l} (\Omega) \hookrightarrow H^m_{\beta} (\Omega ), \quad \text{for any } l \in \mathbb{Z}_+.
\end{align}
In particular for $m = 0$ and $l=1$ we have
\begin{align}\label{eq:Kufner_v2}
\| v \|_{-\alpha} \leq c_\alpha \| \nabla v \|_{0}, \quad \text{for any } v \in H^1_0(\Omega).
\end{align}
for some~$c_\alpha > 0$ depending only on~$\alpha$ and the domain~$\Omega$.

\subsection{Regularity results}\label{sec:EllipticRegularity}
In order to present regularity results for the parabolic problem on non-convex domains, we would like to first summarise regularity properties of a related elliptic problem. We choose this approach, as elliptic and parabolic problems exhibit the same type of singular behaviour in the vicinity of a re-entrant corner.

Consider
\begin{equation}\label{eq:EllipticModelProblem}
-\Delta w  = f \quad \text{in } \Omega,\quad w = 0 \quad \text{on } \partial\Omega.
\end{equation}
Moreover, let $1 - \alpha < \pi/ \Theta$.  It is well known, see, e.g., \cite{Kondratjev1967}, that in the case of domains with re-entrant corners, the $H^2(\Omega)$ regularity of the solution~$w$ usually cannot be obtained regardless of the regularity of the forcing term~$f$. However, the solution $w$ can be split into regular and singular parts, namely
\begin{align}\label{eq:SingularRegularSplit}
w =  \sum_{0 < n < (1 + \alpha) \Theta/ \pi} k_n s_n +  W.
\end{align}
Here, $W\in H^2_{-\alpha}(\Omega)$ denotes the smooth remainder, and $s_n$ are the singular functions defined as
\begin{align}\label{eq:SingularFunctions}
s_n(r, \phi) = \eta(r) r^{n\pi/\Theta}\sin \Big(\frac{n\pi}{\Theta}\phi\Big),
\end{align}
where $(r, \phi)$ are the polar coordinates in the vicinity of a re-entrant corner corresponding to the angle~$\Theta$. Moreover, $\eta$ is a smooth cut-off function equal to $1$ in an arbitrary neighbourhood with a fixed distance from the re-entrant corner and equal to $0$ far from it. Notice that $w \in H^2_{\alpha}(\Omega)$. Furthermore, a precise formula for the stress-intensity factors $k_n$ is known~\cite[Chapter~8]{Grisvard1985}
\begin{align}\label{eq:StressIntensityFactor}
k_n = -\frac{1}{n \pi}\int_\Omega f s_{-n} + u \Delta s_{-n}.
\end{align}


The following regularity result for the parabolic system~\eqref{eq:HeatEquation} was proposed in~\cite{Banasiak1991}.
\begin{thm}\label{thm:Regularity}
Let $u(t)$ be a weak solution of the heat equation~\eqref{eq:HeatEquationWeakSolution} on a polygonal domain $\Omega\subset \mathbb{R}^2$ with a re-entrant corner of size~$\Theta$. Let also $f\in C^\sigma \big([0, T], L^2(\Omega)\big)$, $\sigma > 0$. Then there exists $U \in C \big([ 0, T]; H^2 (\Omega) \big)$ and $k_1 (t) \in C \big( [0, T) \big) \cap C^1 \big( ( 0, T ) \big)$ such that
\begin{align*}
u(t, \mathbf{x}) = U(t, \mathbf{x}) +  k_1(t) s_1(\mathbf{x}).
\end{align*}
Moreover, $u(t, \cdot) \in H^2_{\alpha}(\Omega)$ for all $t > 0$.
\end{thm}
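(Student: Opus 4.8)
The plan is to reduce the parabolic regularity statement to the elliptic singular/regular splitting already recorded in \eqref{eq:SingularRegularSplit}, exploiting the fact that the spatial operator is the same Laplacian in both settings. Concretely, I would freeze time and, for each fixed $t$, regard $u(t,\cdot)$ as the solution of an elliptic problem
\begin{align*}
-\Delta u(t,\cdot) = f(t,\cdot) - u_t(t,\cdot) =: g(t,\cdot) \quad \text{in } \Omega, \qquad u(t,\cdot) = 0 \text{ on } \partial\Omega.
\end{align*}
Provided one can show $g(t,\cdot)\in L^2(\Omega)$, the elliptic theory of \cite{Kondratjev1967,Grisvard1985} immediately yields the decomposition $u(t,\cdot) = U(t,\cdot) + k_1(t)\, s_1$ with $U(t,\cdot)\in H^2(\Omega)$ and stress-intensity coefficient $k_1(t)$ given by the formula \eqref{eq:StressIntensityFactor}. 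Here only the first singular term survives because the hypothesis $\pi < \Theta < 2\pi$ forces $\pi/\Theta < 1 < 2\pi/\Theta$, so $n=1$ is the unique index in the range $0 < n < (1+\alpha)\Theta/\pi$ once $\alpha$ is taken small. The statement $u(t,\cdot)\in H^2_\alpha(\Omega)$ then follows from the remark after \eqref{eq:SingularFunctions} that the full solution lies in the weighted space $H^2_\alpha$.

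The substance of the theorem is not the algebraic splitting but the \emph{temporal} regularity of the ingredients: that $U\in C([0,T];H^2(\Omega))$ and that $k_1\in C([0,T))\cap C^1((0,T))$. First I would establish, from the standard parabolic semigroup theory on $L^2(\Omega)$, that the Hölder continuity $f\in C^\sigma([0,T],L^2(\Omega))$ with $\sigma>0$ propagates to the solution, giving $u\in C([0,T];L^2)$ and, for $t>0$, the smoothing estimate $u_t(t,\cdot)\in L^2(\Omega)$ with a continuous-in-time bound on compact subintervals of $(0,T]$; analytic-semigroup regularity theory (maximal regularity / the $C^\sigma$-to-$C^{1+\sigma}$ lifting for abstract parabolic equations) supplies exactly this. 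Consequently $g(t,\cdot) = f(t,\cdot) - u_t(t,\cdot)$ depends continuously on $t$ in $L^2(\Omega)$, and since the elliptic solution operator maps $L^2$ continuously into the graph space of the splitting, both $U(t,\cdot)$ and $k_1(t)$ inherit continuity in $t$. The $C^1$-in-time regularity of $k_1$ on the open interval would come from differentiating \eqref{eq:StressIntensityFactor} in $t$ and using the extra smoothness $u_t,\,u_{tt}\in L^2$ interior to $(0,T)$, again furnished by analytic-semigroup bootstrapping away from $t=0$.

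I expect the main obstacle to be the behaviour near the initial time $t=0$, where the parabolic smoothing has not yet taken effect: one only claims $k_1\in C([0,T))$ and $U\in C([0,T];H^2)$ up to $t=0$, whereas the $C^1$ regularity of $k_1$ is asserted only on the open interval $(0,T)$. Making the continuity statements sharp at the endpoint requires compatibility of the data with the Dirichlet condition and careful use of the fact that $U$ is the $H^2$ part, so controlling $\|u_t(t)\|_{L^2}$ uniformly as $t\downarrow 0$ is the delicate point; this is precisely where the Hölder hypothesis $f\in C^\sigma$ is used, and where I would lean on the detailed estimates of \cite{Banasiak1991} rather than reprove them. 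A secondary technical point is justifying that the splitting \eqref{eq:SingularRegularSplit} is \emph{compatible} across time, i.e.\ that the cut-off $\eta$ and the singular profile $s_1$ are time-independent so that all $t$-dependence is isolated in the scalar coefficient $k_1(t)$ and in the smooth remainder; this is immediate from the definition \eqref{eq:SingularFunctions}, in which $s_1$ carries no time dependence, so the decomposition is genuinely of the stated separated form.
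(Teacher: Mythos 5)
First, a point of orientation: the paper itself contains \emph{no} proof of this theorem. It is imported from the literature --- the text introduces it with the words ``the following regularity result \ldots was proposed in \cite{Banasiak1991}'' --- so the only thing your proposal can be measured against is the argument in that reference, which is indeed of the type you sketch: freeze $t$, view $u(t,\cdot)$ as the solution of $-\Delta u(t,\cdot)=f(t,\cdot)-u_t(t,\cdot)$, apply the elliptic corner splitting of \cite{Kondratjev1967,Grisvard1985}, and control the time dependence of the right-hand side by analytic semigroup theory. Within that scheme, your observation that only $n=1$ survives in \eqref{eq:SingularRegularSplit} for $\pi<\Theta<2\pi$ (taking $\alpha$ small, any higher modes lying in $H^2$ and being absorbable into $U$) is correct, as is the reduction of the continuity of $U(t)$ and $k_1(t)$ to the continuity of $t\mapsto g(t)=f(t)-u_t(t)$ in $L^2(\Omega)$, since the splitting map $g\mapsto (W,k_1)$ is bounded linear from $L^2(\Omega)$ into $H^2(\Omega)\times\mathbb{R}$.

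Viewed as a standalone proof, however, the proposal has a genuine gap at precisely the two statements that carry the content of the theorem, and at both points you discharge the difficulty by citing \cite{Banasiak1991} --- which is what the paper does in the first place, so nothing is established beyond the citation. Concretely: (i) for $k_1\in C^1\big((0,T)\big)$ you propose to differentiate \eqref{eq:StressIntensityFactor} in time using ``$u_t,\,u_{tt}\in L^2$ interior to $(0,T)$, furnished by analytic-semigroup bootstrapping'', but with $f$ merely H\"older-$\sigma$ in time the Duhamel part of the solution does not possess a second time derivative; the very lifting you invoke ($C^\sigma$ to $C^{1+\sigma}$) yields $u_t\in C^\sigma_{\mathrm{loc}}\big((0,T];L^2(\Omega)\big)$, i.e.\ H\"older continuity of $g$, not differentiability. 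Only the homogeneous part $e^{t\Delta}u_0$ is smooth in time; bootstrapping does not upgrade the inhomogeneous term without additional temporal regularity of $f$, so this step would fail as written. (ii) The assertion $U\in C\big([0,T];H^2(\Omega)\big)$ is closed at $t=0$, where no parabolic smoothing is available; it requires that $u_0$ itself admit the decomposition with matching coefficient $k_1(0)$, a compatibility hypothesis that neither the theorem statement nor your sketch makes explicit and that cannot be generated by the argument you outline. In summary, your reconstruction of the mechanism is the right one, and in that sense more informative than the paper, which offers none; but it is not an independent proof --- at the delicate points it reduces to the same external reference on which the paper relies.
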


The regularity of parabolic equations on domains with conical points was further studied in~\cite[Theorem~3.2]{Nguyen2008} with the results presented in the framework of weighted Sobolev spaces. The following theorem is an important consequence of the analysis presented there
\begin{thm}\label{thm:RegularityU}
Suppose that the assumptions of Theorem~\ref{thm:Regularity} are satisfied. Furthermore, let $f\in L^2\big( 0,T; H^4_{-\alpha}(\Omega) \big)$, $\frac{\mathrm{d}f}{\mathrm{d}t} \in L^2 \big( 0, T; H^2_{-\alpha} (\Omega) \big)$, $\frac{\mathrm{d}^2f}{\mathrm{d}t^2} \in L^2 \big( 0, T; L^2_{-\alpha} (\Omega) \big)$, and $u_0 \in H^3_{-\alpha}(\Omega)$. Assume also that the standard compatibility condition $f(0) + \Delta u_0 \in H^1(\Omega)$ is satisfied. Then,
\begin{align*}
\max_{0\leq t\leq T} \| \Delta u \|_{-\alpha} < \infty, \quad \text{and } \int_0^T \| \Delta u_t \|_{-\alpha}^2 < \infty, \quad \text{and } \int_0^T \big\| u_{tt} \big\|_\alpha^2 < \infty.
\end{align*}
\end{thm}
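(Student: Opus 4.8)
The plan is to reduce the three assertions to a weighted elliptic shift theorem for $-\Delta$ together with two parabolic energy estimates for the time-differentiated equation, noting first that the claims are not independent. Since $\alpha>0$ (because $1-\alpha<\pi/\Theta<1$), the inequality $r^{\alpha}\le C r^{-\alpha}$ on the bounded domain $\Omega$ yields the embedding $L^2_{-\alpha}(\Omega)\hookrightarrow L^2_\alpha(\Omega)$, and combined with the hierarchy~\eqref{eq:Kufner} it gives $f_t\in L^2(0,T;H^2_{-\alpha})\subset L^2(0,T;L^2_\alpha)$. Writing the equation as $u_{tt}=\Delta u_t+f_t$ then shows that the third estimate $\int_0^T\|u_{tt}\|_\alpha^2<\infty$ is an immediate consequence of the second one, $\int_0^T\|\Delta u_t\|_{-\alpha}^2<\infty$. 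Hence it suffices to establish the first two bounds.

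The central tool is the Kondrat'ev shift theorem: because the weight index $-\alpha$ satisfies $1-\alpha<\pi/\Theta$ and therefore lies strictly between the first two critical exponents $\pm\pi/\Theta$ of the corner, the Dirichlet Laplacian is an isomorphism from $H^{s+2}_{-\alpha}(\Omega)\cap H^1_0(\Omega)$ onto $H^s_{-\alpha}(\Omega)$ for the relevant orders $s$, so that $\|u\|_{2,-\alpha}\le C\|\Delta u\|_{-\alpha}$ and likewise for $u_t$. Using the equation in the forms $-\Delta u=f-u_t$ and (after differentiating in time) $-\Delta u_t=f_t-u_{tt}$, and noting that $f\in C([0,T];H^3_{-\alpha})$ and $f_t\in L^2(0,T;H^2_{-\alpha})$ by the data hypotheses and a standard time-interpolation argument, the first two assertions reduce respectively to
\begin{align*}
u_t\in L^\infty\big(0,T;L^2_{-\alpha}(\Omega)\big)\qquad\text{and}\qquad u_{tt}\in L^2\big(0,T;L^2_{-\alpha}(\Omega)\big).
\end{align*}
I would obtain these from energy estimates for the differentiated problem $u_{tt}-\Delta u_t=f_t$, justified rigorously through Galerkin approximation or difference quotients in time rather than by literal differentiation, the hypothesis $\tfrac{\mathrm d^2f}{\mathrm dt^2}\in L^2(0,T;L^2_{-\alpha})$ guaranteeing enough regularity to pass to the limit.

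For the estimates I would test $u_{tt}-\Delta u_t=f_t$ against $r^{-2\alpha}u_t$ and against $r^{-2\alpha}u_{tt}$. The first test produces $\tfrac12\frac{\mathrm d}{\mathrm dt}\|u_t\|_{-\alpha}^2+\|r^{-\alpha}\nabla u_t\|_0^2$ plus a weighted commutator, with right-hand side controlled by $\|f_t\|_{-\alpha}$; a Gronwall argument, started from the initial value $u_t(0)=\Delta u_0+f(0)$, which lies in $L^2_{-\alpha}$ by~\eqref{eq:Kufner_v2} and the compatibility condition $f(0)+\Delta u_0\in H^1(\Omega)$, delivers $u_t\in L^\infty(0,T;L^2_{-\alpha})$. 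The second test yields $\|u_{tt}\|_{-\alpha}^2+\tfrac12\frac{\mathrm d}{\mathrm dt}\|r^{-\alpha}\nabla u_t\|_0^2$ plus a commutator; integrating in time, using $\|r^{-\alpha}\nabla u_t(0)\|_0<\infty$ (which follows from $u_0\in H^3_{-\alpha}$, hence $\Delta u_0\in H^1_{-\alpha}$, together with $f(0)\in H^3_{-\alpha}\subset H^1_{-\alpha}$), gives $u_{tt}\in L^2(0,T;L^2_{-\alpha})$.

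The main obstacle is the weighted commutator $\int_\Omega u_t\,\nabla(r^{-2\alpha})\cdot\nabla u_t$, which is singular at the corner and destroys the clean coercive structure of the energy identity. To handle it I would invoke a weighted Hardy inequality of the form $\|r^{-\alpha-1}v\|_0\le C\|r^{-\alpha}\nabla v\|_0$, valid for $v\in H^1_0(\Omega)$ precisely because $1-\alpha<\pi/\Theta$ keeps the weight below the critical Hardy exponent, so that the commutator can be absorbed into the gradient term $\|r^{-\alpha}\nabla u_t\|_0^2$. Making this absorption quantitatively rigorous, and verifying that the admissibility condition on $\alpha$ is exactly the one simultaneously furnishing the favourable Hardy constant and the Kondrat'ev isomorphism, is the technical heart of the argument. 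Since the resulting estimates are precisely the specialisation of \cite[Theorem~3.2]{Nguyen2008} to the present second-order operator and corner geometry, an equally valid and shorter route is to verify that the hypotheses above match those required in \cite{Nguyen2008} and to quote its conclusion directly.
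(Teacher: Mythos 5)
Your reductions (third claim from the second via $u_{tt}=\Delta u_t+f_t$; first and second claims from bounds on $u_t$ and $u_{tt}$ via the triangle inequality applied to $\Delta u=u_t-f$ and $\Delta u_t=u_{tt}-f_t$) are sound, but both pillars of the direct argument you build on them fail, and each failure is tied to the very phenomenon this paper is about. First, your ``central tool'' carries the wrong sign of the weight: under $1-\alpha<\pi/\Theta$ the Dirichlet Laplacian is an isomorphism from $H^2_{+\alpha}(\Omega)\cap H^1_0(\Omega)$ onto $L^2_{+\alpha}(\Omega)$, \emph{not} from $H^2_{-\alpha}\cap H^1_0$ onto $L^2_{-\alpha}$. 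In the $-\alpha$ spaces the operator is injective but not surjective; its range misses exactly the contribution of $s_1$, which is why the decomposition \eqref{eq:SingularRegularSplit} carries a nonzero stress-intensity factor and why the pollution effect of Theorem~\ref{thm:PollutionEffect} exists at all. Consequently your inequality $\|u\|_{2,-\alpha}\le C\|\Delta u\|_{-\alpha}$ is vacuous for the actual solution: by Theorem~\ref{thm:Regularity}, $u(t)=U(t)+k_1(t)s_1$, so $\|u(t)\|_{2,-\alpha}=\infty$ whenever $k_1(t)\neq0$, while $\|\Delta u(t)\|_{-\alpha}$ stays finite only because $\Delta s_1$ vanishes identically near the corner. (Your reduction survives regardless, since it needs only the PDE and the triangle inequality, not any shift theorem.)

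The energy estimates are the genuine gap. Testing $u_{tt}-\Delta u_t=f_t$ with $r^{-2\alpha}u_t$ produces the commutator $-2\alpha^2\|r^{-\alpha-1}u_t\|_0^2$ (use $\Delta r^{-2\alpha}=4\alpha^2r^{-2\alpha-2}$ in two dimensions), and the \emph{sharp} constant in the weighted Hardy inequality near a corner of opening $\Theta$ is $\|r^{-\alpha-1}v\|_0^2\le\big(\alpha^2+(\pi/\Theta)^2\big)^{-1}\|r^{-\alpha}\nabla v\|_0^2$; absorption therefore requires $2\alpha^2<\alpha^2+(\pi/\Theta)^2$, i.e.\ $\alpha<\pi/\Theta$. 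This is an \emph{upper} bound on $\alpha$, neither implied by nor ``exactly'' the admissibility condition $\alpha>1-\pi/\Theta$; for the admissible range $\alpha\in[\pi/\Theta,1)$ your first estimate does not close. The second test is worse: the commutator $\int_\Omega u_{tt}\,\nabla(r^{-2\alpha})\cdot\nabla u_t$ cannot be split into finite, controllable factors, because $\|r^{-\alpha-1}\nabla u_t\|_0=\infty$ whenever $k_1'(t)\neq0$ (one has $\nabla s_1\sim r^{\pi/\Theta-1}$ and $\pi/\Theta<1+\alpha$ always), while the alternative factor $\|r^{-\alpha-1}u_{tt}\|_0$ can only be treated by Hardy at the price of $\nabla u_{tt}$, which nothing at this level controls; so the claimed bound $u_{tt}\in L^2(0,T;L^2_{-\alpha})$ is not reachable by testing and absorption, and one genuinely needs the singular-expansion or Mellin-transform machinery of weighted parabolic theory. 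This is precisely why the paper proves nothing here: it states the theorem as a consequence of \cite[Theorem~3.2]{Nguyen2008}. Your closing remark --- verify the hypotheses and quote that result --- is the route the paper actually takes, and as the proposal stands it is the only part of it that constitutes a complete proof.
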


We would like to stress out that the same singular functions arise in the solution of both elliptic and parabolic problems on non-convex polygonal domains. Note also that the results of Theorem~\ref{thm:Regularity} and Theorem~\ref{thm:RegularityU} can be easily extended to multiple singular functions.

\subsection{Finite element discretisation}\label{sec:FEM}
Suppose that $\mathcal{T}_H$ is an admissible and shape-regular coarse initial triangulation of the domain~$\overline{\Omega}$. We introduce the space of piecewise linear and globally continuous functions by
\begin{align*}
S_h \coloneqq \{ v \in \mathcal{C}(\overline{\Omega}) : v|_T \in P_1(T),\; \text{for all } T \in \mathcal{T}_h \},
\end{align*}
and define the conforming finite element space by
\begin{align}\label{eq:FiniteElementSpace}
V_h \coloneqq S_h \cap H_0^1(\Omega) \subset H_0^1(\Omega).
\end{align}

The following inverse inequality is a fundamental property of the discrete spaces and follows from the equivalence of norms in the finite dimentional spaces, see~\cite[Section~6.8]{Braess_2007} for more details.

\begin{lem}[Global inverse inequality]\label{lem:StandardInverseInequality}
Let $0\leq l \leq m \leq k$. There exists a constant~$c>0$ such that
\begin{align*}
| v_h |_{H^m(\Omega)} \leq c_i h_{\min}^{l - m} | v_h |_{H^l (\Omega)}, \quad \text{for all } v_h \in S_h,
\end{align*}
where $h_{\min} = \min_{T\in \mathcal{T}_h} h_T$. In particular, for uniformly refined meshes, we have
\begin{align*}
| v_h |_{H^m(\Omega)} \leq c_i h^{l - m} | v_h |_{H^l (\Omega)}, \quad \text{for all } v_h \in S_h,
\end{align*}
where $c_1 h \leq h_{\min} \leq c_2 h$ for some $c_1, c_2 > 0$.
\end{lem}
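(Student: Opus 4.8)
The plan is to prove the inequality elementwise by transporting each $v_h$ to a fixed reference triangle, invoking the equivalence of norms on a finite-dimensional space of polynomials, and then reassembling the global estimate. First I would fix a reference triangle $\hat T$ and, for each $T\in\mathcal{T}_h$, write the affine map $F_T:\hat T\to T$, $F_T(\hat\xx)=B_T\hat\xx+b_T$, so that any $v_h\in S_h$ corresponds to $\hat v_h\coloneqq v_h\circ F_T\in P_k(\hat T)$. Shape-regularity of $\mathcal{T}_h$ guarantees the uniform bounds $\|B_T\|\leq c\,h_T$ and $\|B_T^{-1}\|\leq c\,h_T^{-1}$, with constants independent of $T$.

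The core local ingredient is the reference inequality $|\hat v_h|_{H^m(\hat T)}\leq \hat c\,|\hat v_h|_{H^l(\hat T)}$ for all $\hat v_h\in P_k(\hat T)$. Since $l\leq m$, both seminorms annihilate the subspace $P_{l-1}(\hat T)$ and therefore descend to the finite-dimensional quotient $P_k(\hat T)/P_{l-1}(\hat T)$; on that quotient $|\cdot|_{H^l(\hat T)}$ is a genuine norm (its kernel is exactly $P_{l-1}(\hat T)$), while $|\cdot|_{H^m(\hat T)}$ remains a seminorm. As on a finite-dimensional space every seminorm is dominated by any norm, the bound follows with a constant $\hat c$ depending only on $k$, $l$, $m$ and $\hat T$. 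This is where finite dimensionality is used essentially, and it is the only non-mechanical step.

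Next I would transfer this estimate back to $T$ by the standard change-of-variables seminorm bounds
\[
|v_h|_{H^s(T)}\leq c\,\|B_T^{-1}\|^{\,s}\,|\det B_T|^{1/2}\,|\hat v_h|_{H^s(\hat T)},\qquad
|\hat v_h|_{H^s(\hat T)}\leq c\,\|B_T\|^{\,s}\,|\det B_T|^{-1/2}\,|v_h|_{H^s(T)},
\]
applied with $s=m$ and $s=l$ respectively. Chaining the two through the reference inequality cancels the factor $|\det B_T|^{1/2}$ and yields $|v_h|_{H^m(T)}\leq C\,\|B_T^{-1}\|^{m}\|B_T\|^{l}\,|v_h|_{H^l(T)}$; inserting the shape-regularity bounds gives the local inverse estimate $|v_h|_{H^m(T)}\leq C\,h_T^{\,l-m}\,|v_h|_{H^l(T)}$.

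Finally I would square, sum over all $T\in\mathcal{T}_h$, and use that the exponent $l-m\leq 0$ together with $h_T\geq h_{\min}$ gives $h_T^{\,l-m}\leq h_{\min}^{\,l-m}$; pulling this uniform factor out of the sum and recognising $\sum_T|v_h|_{H^s(T)}^2$ as the (broken) seminorm $|v_h|_{H^s(\Omega)}^2$ produces the claimed global bound. The only point requiring care is that, for $m\geq 2$, the left-hand side must be read as the piecewise seminorm over $\mathcal{T}_h$, since $v_h$ need not lie in $H^m(\Omega)$ globally. The statement for uniformly refined meshes is then immediate from $c_1 h\leq h_{\min}\leq c_2 h$, which absorbs the mesh constants into $c_i$.
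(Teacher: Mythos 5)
Your proposal is correct and is essentially the argument the paper itself invokes: the paper gives no proof but attributes the lemma to the equivalence of norms on finite-dimensional spaces (citing Braess, Section~6.8), which is exactly the key step in your reference-element reduction, completed by the standard scaling estimates and summation over elements. Your refinements --- passing to the quotient $P_k(\hat T)/P_{l-1}(\hat T)$ to make the $H^l$-seminorm a genuine norm, and reading the left-hand side as a broken seminorm when $m\geq 2$ --- are sound and make the cited argument fully rigorous.
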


From now on, we assume that $\mathcal{T}_h$ be a mesh obtained by uniform refinement of~$\mathcal{T}_H$. 

We define the semi-discretisation of the problem~\eqref{eq:HeatEquationWeakSolution} in space as finding $u_h, u_{h,t}[0, T] \rightarrow V_h$ such that
\begin{align}\label{eq:FEMSemiDiscrete}
\langle u_{h, t}(t), v_h \rangle + a(u_h (t), v_h)  &= \langle f, v_h \rangle \quad \text{for all } v_h \in V_h, \text{ and } t\in (0, T]\\
u_h(0) &= \mathcal{P}_h u_0.
\end{align}
The operator $\mathcal{P}_h : L^2 (\Omega) \rightarrow S_h$ is linear and will be precisely specified later. For now, we assume that $\mathcal{P}_h$ is the $L^2$-projection. It is well known~\cite{Brezzi2013,Thomee2006} that then the finite element approximation in a sufficiently regular setting exhibits optimal second-order convergence in the $L^2$-norm and first-order convergence in the energy $H^1$-norm, namely for $a.e.\; t\in [0, T]$
\begin{align*}
\big\| \nabla ( u - u_h ) \big \|_{0} \lesssim  h, \quad \| u - u_h \|_{0} \lesssim h^2.
\end{align*}
The situation is significantly different when re-entrant corners of maximum angle $\Theta > \pi$ are present in the computational domain.

Let us now consider the model elliptic boundary value problem~\eqref{eq:EllipticModelProblem}. Remember, $\Omega$ is a polygonal domain with a re-entrant corner of angle $\pi < \Theta < 2\pi$. 
In case of the standard piecewise linear finite element approximation of the model problem~\eqref{eq:EllipticModelProblem}, we find $w_h \in V_h$ such that
\begin{align}\label{eq:ECFiniteElementModel}
a(w_h, v_h) = \langle f, v_h \rangle \quad \text{for all } v_h\in V_h.
\end{align}
Due to the reduced regularity of the solution of~\eqref{eq:EllipticModelProblem}, as summarised in Section~\ref{sec:EllipticRegularity}, the convergence order of the finite element approximation~\eqref{eq:ECFiniteElementModel} is also not optimal, when measured in the standard and weighted $L^2(\Omega)$-norms. This behaviour is known as the so-called \textit{pollution effect}, see, e.g.,~\cite{ Blum1990,Blum1982, Brenner2008, Ciarlet1991, Strang1988}.
\begin{thm}[Pollution effect]\label{thm:PollutionEffect}
Let $w$ be the solution of~\eqref{eq:EllipticModelProblem} with $f\in L^2_{-\alpha}(\Omega)$ for some $1 - \alpha < \pi / \Theta$. Further, assume that $k_1 \neq 0$, then
\begin{align*}
\| w - w_h \|_{ \alpha} \gtrsim \| \nabla (w - w_h)\|_{0}^2 \gtrsim h^{2\pi/\Theta}.
\end{align*}
\end{thm}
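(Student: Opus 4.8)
The plan is to prove the two asserted lower bounds separately, reading the chain from left to right. Write $e := w - w_h$ and recall that, by construction, $w_h$ is the Galerkin projection defined by \eqref{eq:ECFiniteElementModel}, so the orthogonality $a(e,v_h)=0$ holds for every $v_h\in V_h$; since here $a(u,v)=\langle\nabla u,\nabla v\rangle$, the energy norm is exactly $\|\nabla\cdot\|_0$ and C\'ea's lemma yields the best-approximation identity $\|\nabla e\|_0=\min_{v_h\in V_h}\|\nabla(w-v_h)\|_0$.

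For the left inequality $\|e\|_\alpha\gtrsim\|\nabla e\|_0^2$ I would argue by a short duality. Since $a(e,w_h)=0$ and $w$ solves the variational form of \eqref{eq:EllipticModelProblem}, testing with $e\in H^1_0(\Omega)$ gives
\[
\|\nabla e\|_0^2=a(e,e)=a(e,w)=a(w,e)=\langle f,e\rangle .
\]
Splitting the weight and applying the Cauchy--Schwarz inequality in $L^2(\Omega)$,
\[
\|\nabla e\|_0^2=\big\langle r^{-\alpha}f,\;r^{\alpha}e\big\rangle\leq\|f\|_{-\alpha}\,\|e\|_{\alpha},
\]
and since $f\in L^2_{-\alpha}(\Omega)$ the factor $\|f\|_{-\alpha}$ is finite; this is precisely $\|e\|_\alpha\geq\|f\|_{-\alpha}^{-1}\|\nabla e\|_0^2$.

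It remains to prove $\|\nabla e\|_0^2\gtrsim h^{2\pi/\Theta}$, equivalently $\|\nabla e\|_0\gtrsim h^{\pi/\Theta}$. Here I would combine the best-approximation identity with the splitting \eqref{eq:SingularRegularSplit}, writing $w=k_1s_1+g$ with $g:=w-k_1s_1$. The remainder $g$ collects the smooth part $W$ and all higher singular functions $s_n$, $n\geq2$; since $2\pi/\Theta>1$ these are sufficiently regular (belonging to $H^2$, in the weighted sense up to the corner) to admit an order-$h$ energy approximation, so with the nodal interpolant $I_h$ one has $\min_{v_h}\|\nabla(g-v_h)\|_0\leq\|\nabla(g-I_hg)\|_0\leq Ch$. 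Choosing the competitor $q_h:=w_h-I_hg\in V_h$ for $k_1s_1$ and using the triangle inequality gives
\[
|k_1|\,\min_{v_h\in V_h}\|\nabla(s_1-v_h)\|_0\leq\|\nabla(k_1s_1-q_h)\|_0\leq\|\nabla e\|_0+\|\nabla(g-I_hg)\|_0\leq\|\nabla e\|_0+Ch,
\]
so the task reduces to the lower bound $\min_{v_h}\|\nabla(s_1-v_h)\|_0\gtrsim h^{\pi/\Theta}$ for the leading singular function.

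This last estimate is the heart of the matter and the step I expect to be the main obstacle. I would localise to a single triangle $T_c^{(h)}$ having the re-entrant corner as a vertex. Because $\mathcal{T}_h$ arises by uniform refinement of the fixed coarse mesh $\mathcal{T}_H$, the triangle $T_c^{(h)}$ is similar to the coarse corner triangle with ratio $h/H$, and for $h$ small the cut-off satisfies $\eta\equiv1$ there, so by \eqref{eq:SingularFunctions} $s_1=r^{\pi/\Theta}\sin(\pi\phi/\Theta)$ exactly on $T_c^{(h)}$. As $v_h|_{T_c^{(h)}}$ is affine for every $v_h\in V_h$,
\[
\|\nabla(s_1-v_h)\|_0^2\geq\min_{\mathbf{c}\in\mathbb{R}^2}\int_{T_c^{(h)}}|\nabla s_1-\mathbf{c}|^2\,\mathrm{d}\xx .
\]
Exploiting the homogeneity $s_1(h\hat{\xx})=h^{\pi/\Theta}s_1(\hat{\xx})$ and the scale invariance of the Dirichlet energy in two dimensions, I would rescale $T_c^{(h)}$ onto the reference corner triangle $\hat T$, turning the right-hand side into $C_0\,h^{2\pi/\Theta}$ with $C_0=\min_{\mathbf{c}}\int_{\hat T}|\nabla s_1-\mathbf{c}|^2$. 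This constant is the variance of $\nabla s_1$ over $\hat T$; it is strictly positive and $h$-independent because $s_1$ is not affine on $\hat T$ (its exponent $\pi/\Theta$ lies strictly between $0$ and $1$). Feeding $\min_{v_h}\|\nabla(s_1-v_h)\|_0\geq\sqrt{C_0}\,h^{\pi/\Theta}$ into the previous display yields $\|\nabla e\|_0\geq|k_1|\sqrt{C_0}\,h^{\pi/\Theta}-Ch$, and since $\pi/\Theta<1$ the term $Ch$ is absorbed for $h$ small, leaving $\|\nabla e\|_0\gtrsim|k_1|\,h^{\pi/\Theta}$; the hypothesis $k_1\neq0$ is exactly what keeps this bound nontrivial. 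The delicate points to get right will be the self-similarity of the corner patch under uniform refinement and the positivity and $h$-independence of $C_0$.
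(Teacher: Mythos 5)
The paper offers no proof of this theorem at all --- it simply defers to \cite{Egger2014} --- so there is no in-paper argument to compare against; judged on its own terms, your proof is correct, including the two points you flag as delicate (self-similarity of the corner triangle under uniform refinement of $\mathcal{T}_H$, and the $h$-independent positivity of $C_0$ by homogeneity of $s_1$ and scale invariance of the Dirichlet integral). Moreover, it follows essentially the same strategy as the cited reference: Galerkin orthogonality plus weighted Cauchy--Schwarz, $\|\nabla e\|_0^2=\langle f,e\rangle\le\|f\|_{-\alpha}\|e\|_{\alpha}$, for the first inequality, and the singular/regular splitting \eqref{eq:SingularRegularSplit} combined with a scaling lower bound for the best energy approximation of $s_1$ on the corner patch for the second.
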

The proof of Theorem~\ref{thm:PollutionEffect} can be found in \cite{Egger2014}. Notice that the suboptimal approximation order is also obtained far from the re-entrant corner, so even in the case of elliptic equations, standard piecewise polynomial finite element approximation yields suboptimal convergence order.\\
This translates directly to parabolic problems, since in the presence of non-convex corners in the polygonal domain, the following convergence rates can be observed~\cite{Chatzipantelidis2006}
\begin{align}\label{eq:ConvStFEMParabolic}
\big\| \nabla ( u - u_h ) \big \|_{0} \lesssim h^{\pi/ \Theta}, \quad \| u - u_h \|_{0} \lesssim h^{2\pi/ \Theta}.
\end{align}
These rates can be improved using suitable mesh-grading techniques, so that the optimal convergence in space is regained~\cite{Thomee2006}. However, the corresponding CFL condition for explicit time-stepping schemes, meaning that the time-step needs to be scaled like a square of the size of the smallest element~\cite{Lewy1928}, gets very prohibitive and makes the use of explicit time-stepping schemes less attractive.
\subsection{Energy-corrected finite element for elliptic equations}\label{sec:EnergyCorrection}
Here, we give a brief overview of the energy-correction techniques used for improving the convergence order in the finite element approximations of elliptic problems on polygonal domains. The idea was originally proposed for finite difference schemes in \cite{Ru89,RZ86,Gietl1978} and has been further developed recently in the finite element setting in~\cite{Egger2014,Ruede2014,Swierczynski2018} and is based on a local modification of the bilinear form governing the problem. It was later extended to piecewise polynomial approximation spaces in~\cite{Horger2016HO}. One of the advantages of this method, which we will exploit later, is the possibility of using quasi-uniform meshes.
The pollution effect in Theorem~\ref{thm:PollutionEffect} is a result of an insufficient approximation of the energy $| a(u, u) - a(u_h, u_h) |$ by standard finite element techniques on uniform meshes. In order to remove this effect in the finite element approximation~\eqref{eq:ECFiniteElementModel}, we introduce a modification of the bilinear form $a(\cdot, \cdot)$, which shall mitigate the stiffness of the problem in the vicinity of the singularity.
The modified finite element approximation of~\eqref{eq:EllipticModelProblem} reads then: find $w_h^{\text{m}} \in V_h$ such that
\begin{align}\label{eq:ECFiniteElementModified}
a_h(w_h^{\text{m}}, v_h) = \langle f, v_h \rangle \quad \text{for all } v_h \in V_h,
\end{align}
where the bilinear form is defined as $a_h(w, v) \coloneqq a(w, v) - c_h(w, v)$. We assume that $a_h(\cdot, \cdot)$ is bilinear, continuous and elliptic, namely there exist $c_b, c_b > 0$ such that for all $v_h, w_h \in V_h$
\begin{align}\label{eq:ECBoundedCoercive}
a_h(v_h, w_h) \leq c_b \| \nabla v_h \|_{H^1_0(\Omega)} \| \nabla w_h \|_{H^1_0(\Omega)}, \quad \text{and }\quad  a_h(v_h, v_h) \geq c_c \| \nabla v_h \|^2_{H^1_0(\Omega)}.
\end{align}
Furthermore, we assume that $c_h(\cdot, \cdot)$ is symmetric. One possible choice of the modification is
\begin{align}\label{eq:ECModification}
c_h (w, v)  = \gamma\int_{\omega_h} \nabla w \cdot \nabla v,
\end{align}
where $\omega_h$ is a one element patch around the re-entrant corner and $0 < \gamma < 1/2$. Due to the choice of the modification $c_h(\cdot, \cdot)$, we preserve the sparsity structure of the stiffness matrix, as only a small, fixed number of its entries needs to be suitably scaled. An additional assumption of symmetry of the nodal patch $\omega_h$ is necessary for angles $\Theta \geq 3\pi /2$.

Consequently, taking into account the energy-corrected formulation introduced in~\eqref{eq:ECFiniteElementModified}, we define a modified Ritz projection $R^{\text{m}}_h : H^1_0(\Omega) \rightarrow V_h$ by
\begin{align}\label{eq:ModifiedRitzProjection}
a_h(R_h^{\text{m}} w, v_h) = a(w, v_h) \quad \text{for all } v_h \in V_h.
\end{align}

Let $s_{1,h}^{\text{m}} \in V_h$ denote the modified Ritz projection of the singular function $s_1$. The following theorem, providing sufficient conditions for the optimal convergence of the energy corrected method, was proposed in \cite{Egger2014}.

\begin{thm}\label{thm:EnergyCorrectioOptConvergence}
Let $1 - \alpha < \pi/\Theta$ and $f\in L^2_{-\alpha}(\Omega)$. Let the modification $c_h(\cdot, \cdot)$ be defined as above and satisfy
\begin{align*}
a(s_1 - s^{\mathrm{m}}_{1, h}, s_1 - s^{\mathrm{m}}_{1, h}) - c_h(s_{1,h}^{\mathrm{m}}, s_{1,h}^{\mathrm{m}}) =\mathcal{O}( h^2).
\end{align*}
Then, for the energy-corrected finite element solution we obtain the following optimal error estimates
\begin{align*}
\|w - w^{\mathrm{m}}_h\|_{\alpha} \lesssim h^2 \| f\|_{-\alpha}, \quad \|w - w^{\mathrm{m}}_h\|_{1,  \alpha} \lesssim h \| f \|_{- \alpha}.
\end{align*}
\end{thm}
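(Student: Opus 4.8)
The plan is to identify the discrete solution with a projection of the exact one, reduce the whole error to the behaviour of the leading singular function, and let the hypothesis supply the missing cancellation. First I would observe that $w_h^{\mathrm m}=R_h^{\mathrm m}w$: comparing \eqref{eq:ECFiniteElementModified} with \eqref{eq:ModifiedRitzProjection} and using that the exact solution satisfies $a(w,v_h)=\langle f,v_h\rangle$, the two discrete problems coincide. Hence $w-w_h^{\mathrm m}=w-R_h^{\mathrm m}w$, and by the singular--regular decomposition \eqref{eq:SingularRegularSplit} together with linearity of $R_h^{\mathrm m}$ one gets $w-w_h^{\mathrm m}=k_1\,(s_1-s_{1,h}^{\mathrm m})+(\widetilde W-R_h^{\mathrm m}\widetilde W)$, where $\widetilde W\in H^2_{-\alpha}(\Omega)$ collects the smooth remainder together with the higher modes $s_n$, $n\ge 2$, which already lie in $H^2$ for $\Theta<2\pi$, and where $|k_1|+\|\widetilde W\|_{2,-\alpha}\lesssim\|f\|_{-\alpha}$.

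For the regular contribution I would run a Céa/best-approximation argument for the perturbed form: coercivity and continuity \eqref{eq:ECBoundedCoercive} reduce $\|\nabla(\widetilde W-R_h^{\mathrm m}\widetilde W)\|_0$ to a best-approximation term plus a consistency term, and since $c_h(\cdot,\cdot)$ is supported on the single patch $\omega_h$ on which $\widetilde W$ is smooth, the latter is of higher order. Weighted interpolation estimates then yield $\|\widetilde W-R_h^{\mathrm m}\widetilde W\|_{1,\alpha}\lesssim h\,\|\widetilde W\|_{2,-\alpha}$, and a standard Aubin--Nitsche step upgrades this to $\|\widetilde W-R_h^{\mathrm m}\widetilde W\|_{\alpha}\lesssim h^2\,\|\widetilde W\|_{2,-\alpha}$.

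The core is the singular term. Testing \eqref{eq:ModifiedRitzProjection} with $v_h=s_{1,h}^{\mathrm m}$ gives $a(s_{1,h}^{\mathrm m},s_{1,h}^{\mathrm m})-c_h(s_{1,h}^{\mathrm m},s_{1,h}^{\mathrm m})=a(s_1,s_{1,h}^{\mathrm m})$, so expanding the hypothesis shows it is equivalent to the energy-defect bound $a(s_1,s_1-s_{1,h}^{\mathrm m})=\mathcal{O}(h^2)$. I would use this, together with the fact that the weight $r^\alpha$ (recall $\alpha>1-\pi/\Theta>0$) suppresses the non-optimal behaviour $\sim r^{\pi/\Theta-1}$ of $\nabla s_1$ near the corner, to obtain the weighted energy estimate $\|s_1-s_{1,h}^{\mathrm m}\|_{1,\alpha}\lesssim h$. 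Combined with the regular part, this proves $\|w-w_h^{\mathrm m}\|_{1,\alpha}\lesssim h\,\|f\|_{-\alpha}$.

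Finally, for the weighted $L^2$ bound I would use duality: writing $\|w-w_h^{\mathrm m}\|_\alpha=\sup_{g}(w-w_h^{\mathrm m},g)/\|g\|_{-\alpha}$, I solve the dual problem $-\Delta z=g$ with $g\in L^2_{-\alpha}$, whose weighted regularity gives $z=\kappa_1 s_1+Z$ with $Z\in H^2_{-\alpha}$ and $\|z\|_{2,-\alpha}\lesssim\|g\|_{-\alpha}$. Integration by parts yields $(w-w_h^{\mathrm m},g)=a(w-w_h^{\mathrm m},z)=a(w-w_h^{\mathrm m},z-R_h^{\mathrm m}z)-c_h(w_h^{\mathrm m},R_h^{\mathrm m}z)$, the last term appearing precisely because $a_h\ne a$. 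A weighted Cauchy--Schwarz controls the first term by $h^2$ using the weighted energy estimates for both primal and dual errors. The hard part will be the variational-crime term $c_h(w_h^{\mathrm m},R_h^{\mathrm m}z)$: a naive estimate on the patch $\omega_h$ only gives $h^{2\pi/\Theta}\gg h^2$, and it is exactly the cancellation encoded in the hypothesis---equivalently, the tuning of $\gamma$ so that the singular energy defect is $\mathcal{O}(h^2)$---that upgrades this term to $\mathcal{O}(h^2)\,\|f\|_{-\alpha}\|g\|_{-\alpha}$ and closes the argument.
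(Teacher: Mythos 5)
First, a point of comparison: the paper itself does not prove this theorem; it quotes it from the reference [Egger2014], so your proposal can only be judged against the argument given there. Your skeleton largely matches it: the identification $w_h^{\mathrm m}=R_h^{\mathrm m}w$, the splitting $w=k_1 s_1+\widetilde W$, the algebraic reduction of the hypothesis to the energy defect, i.e. $a(s_1,s_1-s_{1,h}^{\mathrm m})=a(s_1-s_{1,h}^{\mathrm m},s_1-s_{1,h}^{\mathrm m})-c_h(s_{1,h}^{\mathrm m},s_{1,h}^{\mathrm m})$ (your computation here is correct), the perturbed Galerkin orthogonality $a(w-w_h^{\mathrm m},v_h)=-c_h(w_h^{\mathrm m},v_h)$, and the duality argument with the weighted dual regularity $z=\kappa_1 s_1+Z$, $Z\in H^2_{-\alpha}(\Omega)$.

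However, your final duality step fails exactly where the whole difficulty of the theorem sits. You claim that weighted Cauchy--Schwarz controls $a(w-w_h^{\mathrm m},z-R_h^{\mathrm m}z)$ by $h^2$, with the hypothesis needed only to upgrade the variational-crime term $c_h(w_h^{\mathrm m},R_h^{\mathrm m}z)$. Neither half of this can be right. Inserting the decompositions of both errors, the first term contains the block $k_1\kappa_1\,a(s_1-s_{1,h}^{\mathrm m},s_1-s_{1,h}^{\mathrm m})$, and since $s_{1,h}^{\mathrm m}\in V_h$,
\begin{align*}
a(s_1-s_{1,h}^{\mathrm m},s_1-s_{1,h}^{\mathrm m})\;\ge\;\inf_{v_h\in V_h}\|\nabla(s_1-v_h)\|_0^2\;\gtrsim\; h^{2\pi/\Theta}\;\gg\; h^2,
\end{align*}
so no weighted Cauchy--Schwarz can rescue this term: for the primal singular error to be of order $h$ one needs a weight exponent $\beta>1-\pi/\Theta$, for the dual one $\beta<\pi/\Theta-1$, and these are incompatible. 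Symmetrically, the second term contains $k_1\kappa_1\,c_h(s_{1,h}^{\mathrm m},s_{1,h}^{\mathrm m})$ with $c_h(s_{1,h}^{\mathrm m},s_{1,h}^{\mathrm m})=\gamma\int_{\omega_h}|\nabla s_{1,h}^{\mathrm m}|^2\sim h^{2\pi/\Theta}$, which is likewise never $\mathcal{O}(h^2)$ by itself; indeed, if the hypothesis forced that single term to be $\mathcal{O}(h^2)$, it would force $a(s_1-s_{1,h}^{\mathrm m},s_1-s_{1,h}^{\mathrm m})=\mathcal{O}(h^2)$ as well, contradicting the lower bound above. The only way to close the argument --- and this is how the cited proof proceeds --- is to insert the singular/regular splittings into \emph{both} terms of the identity simultaneously, so that the two $\mathcal{O}(h^{2\pi/\Theta})$ blocks combine into exactly $k_1\kappa_1\bigl[a(s_1-s_{1,h}^{\mathrm m},s_1-s_{1,h}^{\mathrm m})-c_h(s_{1,h}^{\mathrm m},s_{1,h}^{\mathrm m})\bigr]$, which the hypothesis makes $\mathcal{O}(h^2)$; the remaining cross terms are then handled by weighted Cauchy--Schwarz, always placing the strong weight $r^{-\alpha}$ on the $H^2_{-\alpha}$-regular factor. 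A secondary gap of the same nature: the estimates you invoke as known --- $\|s_1-s_{1,h}^{\mathrm m}\|_{1,\alpha}\lesssim h$ and a ``standard'' Aubin--Nitsche bound for $\widetilde W-R_h^{\mathrm m}\widetilde W$ --- are themselves nontrivial on a non-convex domain (the dual problem again has a singular component, and the Ritz projection is taken in the unweighted, modified form); establishing them is where most of the technical work of the reference lies.
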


Asymptotically, as $h \rightarrow 0$, there exists a unique, independent of~$h$ optimal parameter $\gamma^*$ on the correction patch $\omega_h$.
Note that the correction parameter depends on the number and shape of the elements $T$ of the correction patch $\omega_h$ and on the angle $\Theta$ of the re-entrant corner. Several effective procedures for finding it, based on nested Newton strategies, were proposed in \cite{Ruede2014}.


Since weighted norms are not as commonly used as standard $L_2$-norms, we would like to obtain similar results to Theorem~\ref{thm:EnergyCorrectioOptConvergence} for the latter. This can be done by exploiting some prior knowledge about the form of the singularity in the solution.\\
According to the decomposition~\eqref{eq:SingularRegularSplit} into singular and regular parts of the exact solution of~\eqref{eq:EllipticModelProblem}, and by the linearity of the problem, we can represent the energy-corrected finite element approximation of $w$ as
\begin{align}
w_h^{\text{m}} = k_1 s_{1, h}^{\text{m}} + R_h^{\text{m}} W,
\end{align}
and the stress-intensity factor~\eqref{eq:StressIntensityFactor} can be efficiently approximated by
\begin{align}\label{eq:StressIntensityFactorDiscrete}
k_1^h = -\frac{1}{\pi}\int_\Omega f s_{-1} + w_h^{\text{m}} \Delta s_{-1}.
\end{align}
Then, due to Theorem~\ref{thm:EnergyCorrectioOptConvergence}, we immediately obtain
\begin{align}\label{eq:StressIntensityFactorConvergence}
| k_1 - k_1^h | \lesssim h^2 \|f \|_{-\alpha}.
\end{align}
We define the post-processed approximation by
\begin{align}\label{eq:PostProcessing}
\widetilde{w}_h^{\text{m}} := w_h^{\text{m}} - k_1^h s_{1, h}^{\text{m}} + k_1^h s_1 = k_1^h s_1 + R_h^{\text{m}} W + \big( k_1 - k_1^h \big) s_{1, h}^{\text{m}}. 
\end{align}
\begin{thm}
The post-processed solution $\widetilde{w}_h^{\text{m}}$ defined in~\eqref{eq:PostProcessing} converges with an optimal rate in standard norms, namely
\begin{align}
\| w - \widetilde{w}_h^{\mathrm{m}} \|_{0} \lesssim h^2 \| f \|_{-\alpha}, \quad \text{and } \quad \| \nabla (w - \widetilde{w}_h^{\mathrm{m}}) \|_{0} \lesssim h \| f \|_{ -\alpha}
\end{align}
\end{thm}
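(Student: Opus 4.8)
The plan is to reduce everything to the exact splitting $w = k_1 s_1 + W$ from \eqref{eq:SingularRegularSplit} together with the representation $w_h^{\mathrm{m}} = k_1 s_{1,h}^{\mathrm{m}} + R_h^{\mathrm{m}} W$. Subtracting the definition \eqref{eq:PostProcessing} of $\widetilde{w}_h^{\mathrm{m}}$ from $w$ and collecting the terms proportional to $s_1$, the combination $k_1 s_1 - k_1^h s_1$ and the approximation of the singular function recombine into
\[
w - \widetilde{w}_h^{\mathrm{m}} = (k_1 - k_1^h)\,(s_1 - s_{1,h}^{\mathrm{m}}) + (W - R_h^{\mathrm{m}} W).
\]
This representation is where the post-processing pays off: the contribution along the singular direction is now multiplied by the superconvergent factor $k_1 - k_1^h$ rather than by $k_1$ itself, so only the (weighted-optimal) behaviour of $k_1^h$ matters there.

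For the singular term I would deliberately \emph{not} use any standard-norm convergence of $s_1 - s_{1,h}^{\mathrm{m}}$ (there is none, by the pollution effect of Theorem~\ref{thm:PollutionEffect}), but only its uniform boundedness. Since $a_h(\cdot,\cdot)$ is coercive and continuous by \eqref{eq:ECBoundedCoercive}, the modified Ritz projection is $H^1_0$-stable, whence $\|\nabla s_{1,h}^{\mathrm{m}}\|_0 \lesssim \|\nabla s_1\|_0$ and, by Poincar\'e, $\|s_1 - s_{1,h}^{\mathrm{m}}\|_0 + \|\nabla(s_1 - s_{1,h}^{\mathrm{m}})\|_0 \leq C$ uniformly in $h$. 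Combining this with the superconvergence estimate \eqref{eq:StressIntensityFactorConvergence}, $|k_1 - k_1^h| \lesssim h^2\|f\|_{-\alpha}$, bounds the first term by $h^2\|f\|_{-\alpha}$ in both the $L^2$-norm and the $H^1$-seminorm (in the energy norm this is one power of $h$ better than required).

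The remaining task is the regular part $W - R_h^{\mathrm{m}} W$ in \emph{standard} norms, and the structural fact I would exploit is that $W$ carries no singular component: since $H^2_{-\alpha}(\Omega)\hookrightarrow H^2(\Omega)$, we have $W \in H^2(\Omega)\cap H^1_0(\Omega)$ with $|W|_{H^2} \lesssim \|W\|_{2,-\alpha}\lesssim\|f\|_{-\alpha}$, and moreover $W$ decays at the corner in the weighted sense, so that $\|\nabla W\|_{L^2(\omega_h)} \lesssim h^{\alpha+1}\|W\|_{2,-\alpha}$. The modified Ritz projection obeys the perturbed orthogonality $a(W - R_h^{\mathrm{m}} W, v_h) = -c_h(R_h^{\mathrm{m}} W, v_h)$, so a Strang-type argument, using the locality of $c_h$ on the one-element patch $\omega_h$ together with the above corner decay and standard interpolation of $W\in H^2$, yields the energy estimate $\|\nabla(W - R_h^{\mathrm{m}} W)\|_0 \lesssim h\,|W|_{H^2} \lesssim h\|f\|_{-\alpha}$. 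For the $L^2$ estimate I would run an Aubin--Nitsche duality with respect to the modified form: let $z$ solve $-\Delta z = W - R_h^{\mathrm{m}} W$, write $\|W - R_h^{\mathrm{m}} W\|_0^2 = a(W - R_h^{\mathrm{m}} W, z)$, insert $R_h^{\mathrm{m}} z$, and split into a Galerkin-type contribution controlled by the two energy estimates and the localized consistency term $c_h(R_h^{\mathrm{m}} W, R_h^{\mathrm{m}} z)$.

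I expect the duality step to be the main obstacle. The weighted estimates of Theorem~\ref{thm:EnergyCorrectioOptConvergence} do not upgrade to unweighted ones on their own, since the weight $r^\alpha$ degenerates at the corner, so the second power of $h$ must be produced by duality. The difficulty is that the dual solution $z$ generically contains a singular part $\kappa\,s_1$ whose coefficient satisfies $|\kappa|\lesssim\|W - R_h^{\mathrm{m}} W\|_0$; the consistency term $c_h(R_h^{\mathrm{m}} W, R_h^{\mathrm{m}} z)$ on $\omega_h$ must therefore be shown to be of higher order despite this singularity. The key is that $\|\nabla R_h^{\mathrm{m}} W\|_{L^2(\omega_h)}\lesssim h^{\alpha+1}$ (corner decay of $W$) while the singular part of $z$ contributes $\|\nabla s_1\|_{L^2(\omega_h)}\lesssim h^{\pi/\Theta}$, and the admissibility $1-\alpha<\pi/\Theta$ gives $\alpha+1+\pi/\Theta>2$, so this term is $o(h^2)\|W - R_h^{\mathrm{m}} W\|_0$ and may be absorbed. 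Assembling the Galerkin part and this higher-order remainder, and using $|W|_{H^2}\lesssim\|f\|_{-\alpha}$, yields $\|W - R_h^{\mathrm{m}} W\|_0\lesssim h^2\|f\|_{-\alpha}$; together with the estimate for the singular term this establishes $\|w - \widetilde{w}_h^{\mathrm{m}}\|_0 \lesssim h^2\|f\|_{-\alpha}$ and $\|\nabla(w - \widetilde{w}_h^{\mathrm{m}})\|_0\lesssim h\|f\|_{-\alpha}$.
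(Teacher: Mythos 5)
Your error representation $w - \widetilde{w}_h^{\mathrm{m}} = (k_1 - k_1^h)(s_1 - s_{1,h}^{\mathrm{m}}) + (W - R_h^{\mathrm{m}} W)$ is exactly the splitting that the rearranged form of \eqref{eq:PostProcessing} is designed to expose, your treatment of the singular direction (superconvergence \eqref{eq:StressIntensityFactorConvergence} times $H^1_0$-stability of $R_h^{\mathrm{m}}$) is correct, and the energy estimate for $W - R_h^{\mathrm{m}} W$ via the perturbed orthogonality $a(W - R_h^{\mathrm{m}} W, v_h) = -c_h(R_h^{\mathrm{m}} W, v_h)$ and the corner decay of $\nabla W$ goes through. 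The paper itself gives no proof, only the reference \cite{Horger2016HO}, so the comparison must be made on substance -- and the substance that reference supplies is precisely the step where your argument has a genuine gap: the $L^2$-duality for the regular part. The term you call the ``Galerkin-type contribution'', $a(g, z - R_h^{\mathrm{m}} z)$ with $g = W - R_h^{\mathrm{m}} W$, cannot be controlled by ``the two energy estimates''. The dual solution splits as $z = \kappa s_1 + Z$ with $|\kappa| + \|Z\|_{H^2} \lesssim \|g\|_0$, and no element of $V_h$ approximates $s_1$ better than $O(h^{\pi/\Theta})$ in the \emph{unweighted} energy norm; the correction cannot move this approximation-theoretic barrier, and Theorem~\ref{thm:EnergyCorrectioOptConvergence} gives first order only in the \emph{weighted} $H^1$-norm. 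Hence $\|\nabla(z - R_h^{\mathrm{m}} z)\|_0 \gtrsim |\kappa|\, h^{\pi/\Theta}$ in general, and Cauchy--Schwarz delivers only $\|g\|_0 \lesssim h^{1+\pi/\Theta}\|f\|_{-\alpha}$, i.e.\ the polluted rate, not $h^2$. Moreover, the local bound you rely on for the consistency term, $\|\nabla R_h^{\mathrm{m}} W\|_{L^2(\omega_h)} \lesssim h^{1+\alpha}$, is unjustified: $R_h^{\mathrm{m}}$ is a global operator, and the triangle inequality through $W$ yields only $\|\nabla R_h^{\mathrm{m}} W\|_{L^2(\omega_h)} \leq \|\nabla(W - R_h^{\mathrm{m}} W)\|_0 + \|\nabla W\|_{L^2(\omega_h)} \lesssim h\|f\|_{-\alpha}$; genuinely local estimates at the corner patch for the modified projection are a delicate matter in their own right (cf.~\cite{Swierczynski2018}).

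What the working proof needs, and what your sketch is missing, is the cancellation mechanism specific to the energy correction. Taking $z_h = \kappa s_{1,h}^{\mathrm{m}} + I_h Z$ as the discrete dual function and using the perturbed orthogonalities of \emph{both} $R_h^{\mathrm{m}} W$ and $s_{1,h}^{\mathrm{m}}$, one obtains
\begin{align*}
\|g\|_0^2 = a(g, z) = \kappa\, a(g, s_1) + a(g, Z - I_h Z) - c_h(R_h^{\mathrm{m}} W, I_h Z),
\end{align*}
the two copies of $\kappa\, c_h(R_h^{\mathrm{m}} W, s_{1,h}^{\mathrm{m}})$ cancelling exactly. The singular pairing is then handled not by approximation but by integration by parts, $a(g, s_1) = -\langle g, \Delta s_1\rangle$, which is admissible because $r^{\pi/\Theta}\sin(\pi\phi/\Theta)$ is harmonic, so $\Delta s_1$ is bounded and supported away from the corner (where the cut-off $\eta$ varies); consequently $\|\Delta s_1\|_{-\alpha} < \infty$ and the weighted estimate $\|g\|_\alpha \lesssim h^2\|\Delta W\|_{-\alpha} \lesssim h^2\|f\|_{-\alpha}$ of Theorem~\ref{thm:EnergyCorrectioOptConvergence} gives $|\kappa\, a(g, s_1)| \lesssim h^2\|f\|_{-\alpha}\|g\|_0$. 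The remaining terms involve only the regular dual part $Z$ and no longer face the $h^{\pi/\Theta}$ barrier (though the patch term $c_h(R_h^{\mathrm{m}} W, I_h Z)$ still requires some care, as naive Sobolev embedding of $\nabla Z$ loses an $\epsilon$). This interplay -- weighted estimates entering through a harmonicity-based integration by parts, combined with exact cancellation of the correction terms -- is how the weighted theory of Theorem~\ref{thm:EnergyCorrectioOptConvergence} upgrades to standard norms, and it is exactly the content the paper outsources to \cite[Section~2]{Horger2016HO}; without it, your duality argument stalls at $O(h^{1+\pi/\Theta})$.
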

For a more detailed analysis of the post-processing we refer the reader to~\cite[Section~2]{Horger2016HO}.
\section{Energy-corrected finite elements for parabolic problem}\label{Sec:FEM_parabolic}
In this section, we investigate the energy-corrected finite element approximation of the parabolic problem~\eqref{eq:HeatEquation}. We begin by analysing the semi-discretisation, where the time variable is continuous. The reasoning is then further extended to a fully discrete case with the explicit Euler time-stepping.

\subsection{Energy-corrected semi-discrete scheme}
We define a modified semi-discrete finite element approximation as
\begin{align}\label{eq:FEMParabolic}
\begin{cases}
\langle u_{h, t}^{\text{m}}, v_h \rangle + a_h(u_h^{\text{m}}, v_h) = \langle f, v_h\rangle,&\quad \text{for all } v_h \in V_h\\
u^{\text{m}}_h(0) = R_h^{\text{m}} u_0, &\quad\text{ }
\end{cases}
\end{align}
where $a_h(\cdot, \cdot)$ is an energy-corrected bilinear form introduced in Section~\ref{sec:EnergyCorrection}. To impose the initial conditions we also use the modified Ritz projection~\eqref{eq:ModifiedRitzProjection} in place of the operator $\mathcal{P}_h$ in~\eqref{eq:FEMSemiDiscrete}.
\begin{lem}[Stability of the semi-discrete scheme]\label{lem:DiscreteStability}
Let $f \in L^2_\alpha(\Omega)$ for some $\alpha < 1$. The semi-discrete solution of problem~\eqref{eq:FEMParabolic} satisfies for some~$C^*>0$
\begin{align*}
\| u_h^{\mathrm{m}} \|^2_\alpha \leq \| R_h^{\mathrm{m}} u_0 \|_\alpha^2 + C^* \int_0^T \| f \|_\alpha^2 \; \mathrm{d}t.
\end{align*}
\end{lem}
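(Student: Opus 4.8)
The plan is to run the classical parabolic energy argument at the semi-discrete level: test the scheme with its own solution, use the coercivity of the corrected bilinear form to generate dissipation, and use the Kufner-type inequality~\eqref{eq:Kufner_v2} to absorb the right-hand side into that dissipation. First I would fix $t\in(0,T]$ and insert $v_h = u_h^{\mathrm{m}}(t)\in V_h$ into the first line of~\eqref{eq:FEMParabolic}. Because $u_h^{\mathrm{m}}$ is differentiable in time with values in the finite-dimensional space $V_h$, the parabolic term is an exact derivative, $\langle u_{h,t}^{\mathrm{m}}, u_h^{\mathrm{m}}\rangle = \tfrac12\tfrac{\mathrm{d}}{\mathrm{d}t}\| u_h^{\mathrm{m}}\|_0^2$, while the coercivity assumption~\eqref{eq:ECBoundedCoercive} bounds the bilinear form below by $c_c\|\nabla u_h^{\mathrm{m}}\|_0^2$. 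This gives the differential inequality
\begin{align*}
\tfrac12\tfrac{\mathrm{d}}{\mathrm{d}t}\| u_h^{\mathrm{m}}\|_0^2 + c_c\|\nabla u_h^{\mathrm{m}}\|_0^2 \leq \langle f, u_h^{\mathrm{m}}\rangle .
\end{align*}

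Next I would treat the load term by splitting the weight, $f\,u_h^{\mathrm{m}} = (r^{\alpha} f)\,(r^{-\alpha} u_h^{\mathrm{m}})$, so that the Cauchy--Schwarz inequality yields $\langle f, u_h^{\mathrm{m}}\rangle \le \| f\|_\alpha\,\| u_h^{\mathrm{m}}\|_{-\alpha}$. The decisive point is that $u_h^{\mathrm{m}}\in V_h\subset H^1_0(\Omega)$, so the embedding~\eqref{eq:Kufner_v2} applies and delivers $\| u_h^{\mathrm{m}}\|_{-\alpha}\le c_\alpha\|\nabla u_h^{\mathrm{m}}\|_0$; this is exactly where the weighted norm $\|f\|_\alpha$ on the right-hand side of the statement comes from. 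A Young inequality then absorbs the gradient factor into the dissipation term, leaving $\tfrac12\tfrac{\mathrm{d}}{\mathrm{d}t}\| u_h^{\mathrm{m}}\|_0^2 \le \tfrac{c_\alpha^2}{4 c_c}\| f\|_\alpha^2$. Integrating from $0$ to $T$, discarding the nonnegative dissipation integral and using the initial datum $u_h^{\mathrm{m}}(0)=R_h^{\mathrm{m}} u_0$ from~\eqref{eq:ModifiedRitzProjection} produces a bound of the claimed shape with $C^\ast$ of the form $c_\alpha^2/(2c_c)$.

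The step I expect to be the main obstacle is reconciling the norms on the two sides. The energy identity above naturally controls the \emph{unweighted} quantities $\|u_h^{\mathrm{m}}\|_0$ and $\|R_h^{\mathrm{m}} u_0\|_0$, whereas the statement is phrased in the weighted norm $\|\cdot\|_\alpha$. Since $\alpha\in(0,1)$ and $r$ is bounded on $\overline{\Omega}$, the weight $r^{\alpha}$ is bounded and one immediately has $\|v\|_\alpha \lesssim \|v\|_0$, which recovers the weighted norm on the left; the difficulty is that the same comparison sends the initial term into the \emph{wrong} weight. The clean way to make both sides consistent is to carry the weight inside the time derivative from the start, i.e.\ to repeat the computation with the weighted inner product $\langle r^{2\alpha}\,\cdot,\cdot\rangle$ so that the parabolic term becomes $\tfrac12\tfrac{\mathrm{d}}{\mathrm{d}t}\| u_h^{\mathrm{m}}\|_\alpha^2$ directly. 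This forces a genuinely weighted test function $r^{2\alpha}u_h^{\mathrm{m}}\notin V_h$, and the technical heart of the argument is then controlling the commutator between the weight and the discrete space, using that $r$ is bounded below away from the corner and that the correction patch $\omega_h$ is a single fixed-size patch; once this weighted energy identity is in place, the remaining Cauchy--Schwarz/Young/integration bookkeeping is identical to the routine computation sketched above.
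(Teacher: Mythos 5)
Your first two paragraphs are precisely the paper's own proof: test \eqref{eq:FEMParabolic} with $v_h=u_h^{\mathrm{m}}$, use $\langle u_{h,t}^{\mathrm{m}},u_h^{\mathrm{m}}\rangle=\tfrac12\tfrac{\mathrm{d}}{\mathrm{d}t}\|u_h^{\mathrm{m}}\|_0^2$ and the coercivity from \eqref{eq:ECBoundedCoercive}, split the weight in the load term, apply \eqref{eq:Kufner_v2}, absorb via Young's inequality, and integrate in time; your constant $C^*\sim c_\alpha^2/c_c$ matches the paper's $C^*\geq c_\alpha^2/(4c_c)$. Up to that point the argument is correct and there is nothing to add.

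Your last paragraph, however, takes a wrong turn. The diagnosis is fair: the energy identity controls the \emph{unweighted} quantities $\|u_h^{\mathrm{m}}\|_0$ and $\|R_h^{\mathrm{m}}u_0\|_0$, while the lemma is phrased in $\|\cdot\|_\alpha$ (the paper's own proof has exactly the same feature and silently identifies the two at the end). But the remedy you propose --- rerunning the argument with the weighted inner product, i.e.\ effectively testing with $r^{2\alpha}u_h^{\mathrm{m}}$ --- is neither workable nor needed. It is not workable because $r^{2\alpha}u_h^{\mathrm{m}}\notin V_h$, so one must test with a discrete surrogate and control consistency terms involving $\nabla(r^{2\alpha})\sim r^{2\alpha-1}$, which is unbounded at the corner, exactly where the correction $c_h(\cdot,\cdot)$ in \eqref{eq:ECModification} lives; nothing in the paper's toolbox controls such terms, and calling this step ``routine bookkeeping'' conceals an argument at least as delicate as the lemma itself. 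It is not needed because the reconciliation is trivial in the only direction required: in the relevant regime $0<1-\alpha<\pi/\Theta$ one has $\alpha>0$, so $r^\alpha$ is bounded on $\overline{\Omega}$ and $\|v\|_\alpha\leq\big(\sup_\Omega r\big)^{\alpha}\|v\|_0$, which converts the left-hand side. For the initial term, the honest output of the energy argument is $\|R_h^{\mathrm{m}}u_0\|_0^2$ rather than the (smaller) $\|R_h^{\mathrm{m}}u_0\|_\alpha^2$; the statement as printed is a harmless overstatement, and indeed the lemma is applied later, in the proof of Theorem~\ref{thm:MainResult}, with the initial term in the $\|\cdot\|_0$ norm and in fact equal to zero, since there $\eta(0)=0$. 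So keep your first two paragraphs, replace the third by the one-line weight bound, and state the initial term in the unweighted norm.
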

\begin{proof}
An application of the ellipticity of the bilinear form~$a_h(\cdot, \cdot)$, see~\eqref{eq:ECBoundedCoercive}, the Cauchy-Schwarz and Young inequalities, together with the choice $v_h = u_h^{\text{m}}$ in Equation~\eqref{eq:FEMParabolic}, yields for any~$\epsilon>0$

\begin{align*}
\frac{1}{2}\frac{\mathrm{d}}{\mathrm{d}t} \| u_h^{\text{m}}\|_0^2 + c_c\| \nabla u_h^{\text{m}} \|_0^2 &\leq \langle f, u_h^{\text{m}}\rangle \\ &\leq \|f \|_{\alpha}\| u_h^{\text{m}} \|_{-\alpha} 
\leq \frac{1}{2\epsilon}\| f \|^2_{\alpha} + \frac{\epsilon }{2} \| u_h^{\text{m}} \|_{-\alpha}^2.
\end{align*}

Using the embedding~\eqref{eq:Kufner_v2} $H^1(\Omega) \hookrightarrow L^2_{-\alpha}(\Omega)$ for $1 - \alpha > 0$, we get
\begin{align*}
\frac{1}{2}\frac{\mathrm{d}}{\mathrm{d}t} \| u_h^{\text{m}}\|_0^2 + c_c\| \nabla u_h^{\text{m}} \|_0^2 \leq \frac{1}{2\epsilon}\| f \|^2_{\alpha} + \frac{\epsilon c_\alpha}{2} \| \nabla u_h^{\text{m}} \|_0^2,
\end{align*}
where $c_\alpha$ is a constant coming from Inequality~\eqref{eq:Kufner_v2}. Choosing $\epsilon \leq 2 c_c/c^2_\alpha$ we arrive at
\begin{align*}
\frac{1}{2}\frac{\mathrm{d}}{\mathrm{d}t} \| u_h^{\text{m}}\|_0^2 \leq C^*\| f \|^2_{\alpha},
\end{align*}
where $C^* \geq c^2_\alpha/4c_c$. Integrating both sides over the time interval~$[0,T]$ completes the proof of the lemma.
\end{proof}
\begin{rem}
In the case of the standard choice of the modified bilinear form~\eqref{eq:ECModification} in the energy-corrected scheme we have $c_c = 1$ and the stability constant $C^*$ above can be reduced to $C^* = c^2_\alpha/4$. The embedding constant $c_\alpha$ from~\eqref{eq:Kufner_v2} is an equivalent of the Poincar\'{e}-Friedrichs constant in weighted spaces and depends only on the domain~$\Omega$.
\end{rem}
%
\begin{thm}\label{thm:MainResult}
Suppose that functions $u_0$ and $f$ satisfy the regularity requirements stated in Theorem~\ref{thm:Regularity} and Theorem~\ref{thm:RegularityU} and let $0 < 1 - \alpha < \pi/\Theta$. The energy corrected semi-discretisation~\eqref{eq:FEMParabolic} of Problem~\eqref{eq:HeatEquation} yields optimal convergence rate in the weighted $L^2$-norm, namely for some~$c>0$ independent of~$u$
\begin{align}\label{eq:MainResult}
\max_{0\leq t\leq T}\| u - u_h^{\mathrm{m}}\|_{\alpha} \leq c h^2 \bigg( \max_{0\leq t\leq T}\| \Delta u (t) \|^2_{-\alpha} + \int_0^T \| \Delta u_t(t) \|_{-\alpha}^2 \; \mathrm{d}t \bigg)^{1/2}.
\end{align}
\end{thm}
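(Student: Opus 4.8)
The standard approach to proving optimal $L^2$-type error estimates for parabolic finite element methods is the splitting via an elliptic projection, and I would follow that template here, adapting it to the weighted norms and the modified Ritz projection $R_h^{\mathrm{m}}$. The plan is to write the error as
\begin{align*}
u - u_h^{\mathrm{m}} = \big( u - R_h^{\mathrm{m}} u \big) + \big( R_h^{\mathrm{m}} u - u_h^{\mathrm{m}} \big) =: \rho + \theta,
\end{align*}
where $\rho$ is the projection error and $\theta \in V_h$ is the purely discrete part. The term $\rho$ is controlled directly by the elliptic theory: by Theorem~\ref{thm:EnergyCorrectioOptConvergence} applied pointwise in time (the modification $c_h$ is assumed to satisfy the optimality condition there), one obtains $\| \rho(t) \|_{\alpha} \lesssim h^2 \| \Delta u(t) \|_{-\alpha}$, since for the elliptic problem with right-hand side $-\Delta u(t)$ the solution is exactly $u(t)$ and its modified Ritz projection is $R_h^{\mathrm{m}} u(t)$. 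The crux of the argument is then bounding $\theta$.

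To estimate $\theta$, I would derive its evolution equation. Subtracting~\eqref{eq:FEMParabolic} from the weak formulation~\eqref{eq:HeatEquationWeakSolution} tested against $v_h \in V_h$, and using the defining property~\eqref{eq:ModifiedRitzProjection} of $R_h^{\mathrm{m}}$ to replace $a_h(R_h^{\mathrm{m}} u, v_h)$ by $a(u, v_h)$, the stiffness terms cancel and one is left with an equation of the form
\begin{align*}
\langle \theta_t, v_h \rangle + a_h(\theta, v_h) = -\langle \rho_t, v_h \rangle \quad \text{for all } v_h \in V_h,
\end{align*}
where $\rho_t = u_t - R_h^{\mathrm{m}} u_t$ because $R_h^{\mathrm{m}}$ is linear and time-independent. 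This is precisely the form to which the stability estimate of Lemma~\ref{lem:DiscreteStability} applies, with $\rho_t$ playing the role of the forcing $f$. Invoking that lemma (or re-running its short energy argument) gives
\begin{align*}
\| \theta \|_{\alpha}^2 \lesssim \| \theta(0) \|_{\alpha}^2 + \int_0^T \| \rho_t \|_{\alpha}^2 \; \mathrm{d}t.
\end{align*}
The initial term vanishes by the choice $u_h^{\mathrm{m}}(0) = R_h^{\mathrm{m}} u_0$, so $\theta(0) = R_h^{\mathrm{m}} u_0 - R_h^{\mathrm{m}} u_0 = 0$. For the time-integral I would again apply the elliptic estimate, now to $\rho_t = u_t - R_h^{\mathrm{m}} u_t$, recognising $R_h^{\mathrm{m}} u_t$ as the modified Ritz projection of $u_t$, to get $\| \rho_t(t) \|_{\alpha} \lesssim h^2 \| \Delta u_t(t) \|_{-\alpha}$. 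Combining the bounds on $\rho$ and $\theta$ via the triangle inequality yields~\eqref{eq:MainResult}, and the regularity hypotheses of Theorem~\ref{thm:RegularityU} guarantee that the quantities $\max_t \| \Delta u \|_{-\alpha}$ and $\int_0^T \| \Delta u_t \|_{-\alpha}^2$ appearing on the right-hand side are finite.

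The main obstacle I anticipate is the \textbf{commutation of $R_h^{\mathrm{m}}$ with the time derivative} and the legitimacy of applying the stationary elliptic estimate of Theorem~\ref{thm:EnergyCorrectioOptConvergence} to $u_t$ rather than to $u$. Formally $R_h^{\mathrm{m}}$ is linear and does not act on $t$, so $\partial_t (R_h^{\mathrm{m}} u) = R_h^{\mathrm{m}} u_t$; one should nonetheless check that $u_t$ lies in the right weighted space and that its modified Ritz projection inherits the $\mathcal{O}(h^2)$ rate, which is exactly where the hypothesis $\int_0^T \| \Delta u_t \|_{-\alpha}^2 < \infty$ from Theorem~\ref{thm:RegularityU} is used. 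A subtler point is that Theorem~\ref{thm:EnergyCorrectioOptConvergence} is stated for the weighted norm $\| \cdot \|_{\alpha}$ with right-hand side measured in $\| \cdot \|_{-\alpha}$; since $-\Delta u$ and $-\Delta u_t$ play the roles of the data, I must verify that the pairing of the weighted $L^2_{\alpha}$ and $L^2_{-\alpha}$ norms used in Lemma~\ref{lem:DiscreteStability} (via the embedding~\eqref{eq:Kufner_v2}) is compatible with feeding $\rho_t$ measured in $\| \cdot \|_{\alpha}$ into the stability estimate. Everything else is a routine energy argument, so the weighted-space bookkeeping is where the care is needed.
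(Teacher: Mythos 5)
Your proposal is correct and follows essentially the same route as the paper's own proof: the same splitting $u - u_h^{\mathrm{m}} = \rho + \eta$ via the modified Ritz projection $R_h^{\mathrm{m}}$, the same pointwise-in-time application of Theorem~\ref{thm:EnergyCorrectioOptConvergence} to bound $\rho$ and $\rho_t$, the same error equation $\langle \eta_t, v_h\rangle + a_h(\eta, v_h) = -\langle \rho_t, v_h\rangle$ fed into the stability estimate of Lemma~\ref{lem:DiscreteStability}, and the same observation that $\eta(0)=0$ by the choice of discrete initial data. The weighted-space subtleties you flag (commuting $R_h^{\mathrm{m}}$ with $\partial_t$ and pairing $\|\cdot\|_\alpha$ data against the $L^2_{-\alpha}$ embedding) are handled implicitly in the paper exactly as you describe, with Theorem~\ref{thm:RegularityU} guaranteeing finiteness of the right-hand side.
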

\begin{proof}
We proceed in a standard manner by splitting the discretisation error into two independent parts

\begin{align}\label{eq:ErrorSplit}
u(t) - u^{\text{m}}_h(t) &= \Big( u(t) - R^{\text{m}}_h u(t) \Big) + \Big( R^{\text{m}}_h u(t) - u_h^{\text{m}}(t)\Big)  =: \rho + \eta,
\end{align}
where $R_h^{\text{m}}$ denotes the energy-corrected Ritz projection defined in Equation~\eqref{eq:ModifiedRitzProjection}. Hence, due to Theorem~\ref{thm:EnergyCorrectioOptConvergence}

\begin{align}\label{eqConvOfRho1}
\| \rho \|_{\alpha}  &= \| u(t) - R^{\text{m}}_h u(t) \|_{\alpha} \leq ch^2 \| \Delta u(t) \|_{-\alpha}
\end{align}
and
\begin{align}\label{eqConvOfRho2}
\| \rho_t \|_{\alpha}  &= \| u_t(t) - R^{\text{m}}_h u_t(t) \|_{\alpha} \leq ch^2 \| \Delta u_t(t) \|_{-\alpha}.
\end{align}

Using the definition of the modified Ritz projection~\eqref{eq:ModifiedRitzProjection}, definition of the continuous solution~\eqref{eq:HeatEquationWeakSolution} and the energy-corrected discretisation~\eqref{eq:FEMParabolic} we arrive at
\begin{align*}
\big< \eta_t, v_h \big> + a_h (\eta, v_h) = \big< - \rho_t, v_h \big>.
\end{align*}

Finally, due to Lemma~\ref{lem:DiscreteStability}, we get

\begin{align*}
\| R^{\text{m}}_h u(t) - u_h^{\text{m}}(t) \|_\alpha^2 &= \| \eta (t) \|_\alpha^2  \leq \| \eta (0) \|_0^2 + C^* \int_0^t \| \rho_t \|_\alpha^2 \; \mathrm{d}t.
\end{align*}

Note that due to Theorem~\ref{thm:RegularityU}, the right-hand side of the inequality above is well-defined.  Moreover, the discrete initial conditions were chosen in a way that~$\eta(0) = 0$.

Combining this with Equation~\eqref{eq:ErrorSplit} gives

\begin{align*}
\max_{0\leq t\leq T}\| u - u_h^{\text{m}}\|^2_{\alpha} \leq \max_{0\leq t\leq T} \Big(\| \rho \|^2_\alpha + \| \eta \|^2_\alpha \Big) \leq  \bigg(\max_{0\leq t\leq T}\| \rho \|^2_\alpha + C^*\int_0^T \| \rho_t \|_\alpha^2 \; \mathrm{d}t \bigg).
\end{align*}

Finally, application of the results stated in~\eqref{eqConvOfRho1} and~\eqref{eqConvOfRho2} completes the proof.

\end{proof}

The right-hand side of Equation~\eqref{eq:MainResult} is finite, see Theorem~\ref{thm:RegularityU}. The above theorem shows that the application of the energy-corrected finite element scheme to the parabolic equations results in the optimal accuracy of the scheme, when compared to the interpolation error.

\subsection{Energy-corrected fully discrete scheme}

Now, we move to the fully discrete setting, where also the temporal dimention is discretised. We consider only explicit Euler time-stepping, which later will serve as a foundation for building fast numerical schemes. The extension to a more general case of $\theta$-scheme in time is straightforward.

The fully discrete energy-corrected finite element approximation of the model problem~\eqref{eq:HeatEquationWeakSolution} reads as follows: Find $U_h^{\text{m}, n} \in V_h$ for $0 \leq n \leq N$ such that
\begin{align}\label{eq:EETimeStepping}
\Big< \frac{U_h^{\text{m}, n + 1} - U_h^{\text{m}, n}}{\Delta t}, v_h \Big> + a_h(U_h^{\text{m}, n}, v_h) = \big<f (t_n), v_h \big>, \quad \text{for all } v_h \in V_h.
\end{align}
The initial condition, as before, is imposed using the modified Ritz projection~\eqref{eq:ModifiedRitzProjection}
\begin{align*}
U^{\text{m},0}_h = R_h^\text{m} u_0.
\end{align*}

In order to investigate the stability of the explicit Euler scheme, we need to introduce the so-called CFL (Courant--Friedrichs--Lewy) condition, which holds for all admissible triangulations.

\begin{deftion}\label{def:CFLCondition}
Consider the explicit Euler time-stepping introduced above. We define the CFL condition as
\begin{align}\label{eq:CFLCondition}
h^{-2}_{\min} \Delta t \leq c_s,
\end{align}
where $h_{\min} = \min_{T \in \mathcal{T}_h} h$ and $c_s$ is a constant independent of the triangulation and the time-step~$\Delta t$.
\end{deftion}

This condition was first introduced in~\cite{Lewy1928} in the context of finite difference methods. The extension concerning the finite element methods for time-dependent problems can be found in~\cite{Baker1977,Brenner1982}. Note that in the case of uniform meshes $h_{\min}$ can be replaced with the mesh size~$h$ in~\eqref{eq:CFLCondition}.

We begin the convergence analysis of the scheme by showing an auxiliary result bouding the finite difference in the formulation~\eqref{eq:EETimeStepping}. Let $c_i > 0$ denote the constant appearing in the inverse inequality from Lemma~\ref{lem:StandardInverseInequality}, when $l=0$ and $m=1$. Furthermore, let $c_\alpha >0$ and $c_b>0$ be respectively the constants in inequalities~\eqref{eq:Kufner_v2} and~\eqref{eq:ECBoundedCoercive}.

\begin{lem}\label{lem:DiscreteAux1}
Suppose that~$f\in C \big( 0, T; L^2_\alpha (\Omega) \big)$ for some~$0 \leq \alpha < 1$. Then for all~$0\leq n \leq N-1$
\begin{align*}
\Big\|  \frac{U^{\mathrm{m}, n+1}_h - U^{\mathrm{m}, n}_h}{\Delta t} \Big\|_0 \leq c_i h^{-1} \Big( c_\alpha \| f \|_\alpha + c_b \| \nabla U^{\mathrm{m}, n}_h \|_0 \Big).
\end{align*}
\end{lem}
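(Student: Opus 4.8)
The plan is to test the time-stepping equation~\eqref{eq:EETimeStepping} against the natural choice $v_h = \frac{U^{\mathrm{m},n+1}_h - U^{\mathrm{m},n}_h}{\Delta t}$, which is itself an element of $V_h$. With this choice the first term becomes exactly $\big\| \frac{U^{\mathrm{m},n+1}_h - U^{\mathrm{m},n}_h}{\Delta t} \big\|_0^2$, and rearranging gives
\begin{align*}
\Big\| \frac{U^{\mathrm{m},n+1}_h - U^{\mathrm{m},n}_h}{\Delta t} \Big\|_0^2 = \big< f(t_n), v_h \big> - a_h(U^{\mathrm{m},n}_h, v_h),
\end{align*}
so the whole task reduces to bounding the two terms on the right by (constant)$\,\times\, \big\| \frac{U^{\mathrm{m},n+1}_h - U^{\mathrm{m},n}_h}{\Delta t} \big\|_0$ and then dividing through by one power of that norm.

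First I would estimate the source term. Writing $w_h := v_h = \frac{U^{\mathrm{m},n+1}_h - U^{\mathrm{m},n}_h}{\Delta t}$, I apply Cauchy--Schwarz in the weighted duality pairing to get $\big< f(t_n), w_h \big> \leq \| f \|_\alpha \, \| w_h \|_{-\alpha}$, and then the weighted Poincar\'e--Friedrichs embedding~\eqref{eq:Kufner_v2} to replace $\| w_h \|_{-\alpha}$ by $c_\alpha \| \nabla w_h \|_0$. Next I would handle the bilinear-form term using the continuity bound in~\eqref{eq:ECBoundedCoercive}, which gives $a_h(U^{\mathrm{m},n}_h, w_h) \leq c_b \| \nabla U^{\mathrm{m},n}_h \|_0 \, \| \nabla w_h \|_0$. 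Combining the two estimates yields
\begin{align*}
\| w_h \|_0^2 \leq \big( c_\alpha \| f \|_\alpha + c_b \| \nabla U^{\mathrm{m},n}_h \|_0 \big) \, \| \nabla w_h \|_0.
\end{align*}

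The final step, and the only genuinely nontrivial one, is to convert the factor $\| \nabla w_h \|_0$ on the right back into the $L^2$-norm $\| w_h \|_0$ so that one power can be cancelled. This is exactly where the global inverse inequality of Lemma~\ref{lem:StandardInverseInequality} enters: with $l=0$, $m=1$ and constant $c_i$ on the uniformly refined mesh it gives $\| \nabla w_h \|_0 \leq c_i h^{-1} \| w_h \|_0$. Substituting this and dividing both sides by $\| w_h \|_0$ (the inequality being trivial when $w_h = 0$) produces
\begin{align*}
\| w_h \|_0 \leq c_i h^{-1} \big( c_\alpha \| f \|_\alpha + c_b \| \nabla U^{\mathrm{m},n}_h \|_0 \big),
\end{align*}
which is precisely the claimed bound. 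I expect the main conceptual obstacle to be recognising that the inverse inequality is the right tool to close the estimate: it is what produces the characteristic $h^{-1}$ factor and reflects the fact that one cannot control a discrete time difference in $L^2$ by the data without paying a price in the mesh size. Everything else is a routine application of Cauchy--Schwarz, the embedding~\eqref{eq:Kufner_v2}, and the continuity of $a_h(\cdot,\cdot)$.
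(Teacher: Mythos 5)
Your proposal is correct and follows essentially the same route as the paper's proof: testing~\eqref{eq:EETimeStepping} with $v_h = \frac{U^{\mathrm{m},n+1}_h - U^{\mathrm{m},n}_h}{\Delta t}$, bounding the right-hand side via Cauchy--Schwarz, the weighted embedding~\eqref{eq:Kufner_v2}, and the continuity bound~\eqref{eq:ECBoundedCoercive}, and then closing the estimate with the inverse inequality of Lemma~\ref{lem:StandardInverseInequality}. The only difference is cosmetic (you move the bilinear-form term to the right-hand side before estimating, whereas the paper estimates both terms in place), so no further comment is needed.
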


\begin{proof}
Let us set $v_h =  \frac{U^{\text{m}, n+1}_h - U^{\text{m}, n}_h}{\Delta t} $ in~\eqref{eq:EETimeStepping}. Then, applying the Cauchy-Schwarz inequality and using the boundedness of the bilinear form~$a_h(\cdot, \cdot)$, we get

\begin{align*}
\Big\|  \frac{U^{\text{m}, n+1}_h - U^{\text{m}, n}_h}{\Delta t} \Big\|^2_0 &= \Big< f(t_n) , \frac{U^{\text{m}, n+1}_h - U^{\text{m}, n}_h}{\Delta t} \Big> - a_h \Big(U^{\text{m}, n}_h, \frac{U^{\text{m}, n+1}_h - U^{\text{m}, n}_h}{\Delta t}\Big)\\
&\leq \| f \|_\alpha \Big\|  \frac{U^{\text{m}, n+1}_h - U^{\text{m}, n}_h}{\Delta t} \Big\|_{-\alpha} + c_b \| \nabla U^{\text{m},n}_h \|_0 \Big\|\nabla  \frac{U^{\text{m}, n+1}_h - U^{\text{m}, n}_h}{\Delta t} \Big\|_0.
\end{align*}

Due to~\eqref{eq:Kufner_v2}, we obtain

\begin{align*}
\Big\|  \frac{U^{\text{m}, n+1}_h - U^{\text{m}, n}_h}{\Delta t} \Big\|^2_0 \leq \Big( c_\alpha\| f \|_\alpha   + c_b \| \nabla U^{\text{m},n}_h \|_0\Big) \Big\| \nabla \frac{U^{\text{m}, n+1}_h - U^{\text{m}, n}_h}{\Delta t} \Big\|_0.
\end{align*}

Finally, application of the inverse inequality from Lemma~\ref{lem:StandardInverseInequality} yields the desired result.
\end{proof}

Now, we can state the stability result in weighted Sobolev spaces, which will prove crucial for showing the error estimates for the fully discrete scheme. Similarly as in the case of the standard norms, the explicit time-stepping scheme is stable only under an additional assumption that the CFL condition~\eqref{eq:CFLCondition} is satisfied. We provide the precise value of the stability constant.

\begin{thm}[Stability of the fully discrete scheme]\label{thm:ECParabolicStability}
Suppose that for some $0 \leq \alpha < 1$ we have $f\in C \big( 0, T; L^2_\alpha (\Omega) \big)$ and let $0 < \epsilon< 1/2$, $0< \delta < \frac{c_c}{c_\alpha^2}$. Suppose also that the CFL condition proposed in Definition~\ref{def:CFLCondition} is satisfied with the constant $c_s = 2\frac{c_c -  c_\alpha^2 \delta}{c_i^2 c_b^2(1 + \epsilon)}$. Then for some~$c_{\epsilon, \delta}>0$ independent of~$h$ and~$\Delta t$, we have

\begin{align*}
\| U^{\mathrm{m}, n}_h \|^2_0 \leq \| U^{\mathrm{m}, 0}_h \|^2_0 + c_{\epsilon, \delta} \Delta t \sum_{k=0}^{n-1} \| f(t_n) \|^2_\alpha.
\end{align*}
\end{thm}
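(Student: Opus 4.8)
The plan is to derive a discrete energy identity by testing the scheme~\eqref{eq:EETimeStepping} with $v_h = U_h^{\mathrm{m},n}$ and then to tame the single term that the explicit character of the method introduces. Writing $U^n := U_h^{\mathrm{m},n}$ for brevity and using the elementary identity $2\langle a-b, b\rangle = \|a\|_0^2 - \|b\|_0^2 - \|a-b\|_0^2$ with $a = U^{n+1}$, $b = U^n$, the choice $v_h = U^n$ turns~\eqref{eq:EETimeStepping} into
\begin{align*}
\frac{1}{2\Delta t}\big( \|U^{n+1}\|_0^2 - \|U^n\|_0^2 \big) + a_h(U^n, U^n) = \langle f(t_n), U^n\rangle + \frac{1}{2\Delta t}\|U^{n+1} - U^n\|_0^2 .
\end{align*}
In contrast to the implicit case, the quantity $\tfrac{1}{2\Delta t}\|U^{n+1}-U^n\|_0^2$ now sits on the right-hand side with an unfavourable sign, and absorbing it is the crux of the argument.

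First I would bound this term. By Lemma~\ref{lem:DiscreteAux1} one has $\|U^{n+1}-U^n\|_0^2 \leq \Delta t^2 c_i^2 h^{-2}\big(c_\alpha\|f\|_\alpha + c_b\|\nabla U^n\|_0\big)^2$; dividing by $2\Delta t$ and invoking the CFL condition $h^{-2}\Delta t \leq c_s$ of Definition~\ref{def:CFLCondition} gives $\tfrac{1}{2\Delta t}\|U^{n+1}-U^n\|_0^2 \leq \tfrac{c_s c_i^2}{2}\big(c_\alpha\|f\|_\alpha + c_b\|\nabla U^n\|_0\big)^2$. Splitting the square by Young's inequality in the form $(a+b)^2 \leq (1+\epsilon)b^2 + (1+\tfrac{1}{\epsilon})a^2$ isolates a gradient contribution with coefficient $\tfrac{c_s c_i^2(1+\epsilon)c_b^2}{2}$. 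The value $c_s = 2\tfrac{c_c - c_\alpha^2\delta}{c_i^2 c_b^2(1+\epsilon)}$ stated in the theorem is calibrated precisely so that this coefficient equals $c_c - c_\alpha^2\delta$; this identity is the key computation and the main obstacle, because it makes the gradient term exactly reabsorbable by the coercivity bound $a_h(U^n,U^n)\geq c_c\|\nabla U^n\|_0^2$ from~\eqref{eq:ECBoundedCoercive}, while leaving a controlled surplus $c_\alpha^2\delta\,\|\nabla U^n\|_0^2$ available for the source. Positivity of $c_s$, and hence admissibility of the whole construction, is guaranteed exactly by the hypothesis $\delta < c_c/c_\alpha^2$.

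It remains to absorb the data term and close the recursion. I would estimate $\langle f(t_n), U^n\rangle \leq \|f\|_\alpha\|U^n\|_{-\alpha} \leq c_\alpha\|f\|_\alpha\|\nabla U^n\|_0$ using Cauchy--Schwarz and the embedding~\eqref{eq:Kufner_v2}, and then apply Young's inequality so that its gradient part is dominated by the surplus $c_\alpha^2\delta\,\|\nabla U^n\|_0^2$ and the remainder is a multiple of $\|f\|_\alpha^2$. Collecting all contributions, every $\|\nabla U^n\|_0^2$ term cancels and one arrives at $\|U^{n+1}\|_0^2 - \|U^n\|_0^2 \leq \Delta t\, c_{\epsilon,\delta}\,\|f(t_n)\|_\alpha^2$, where $c_{\epsilon,\delta}$ is an explicit combination of $c_s, c_i, c_\alpha, \epsilon, \delta$ and is manifestly independent of $h$ and $\Delta t$. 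Summing this telescoping inequality over the steps from $0$ to $n-1$ yields the asserted bound. The only genuine difficulty lies in the calibration of $c_s$ in the second step; everything else is routine energy-method bookkeeping.
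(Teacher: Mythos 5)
Your proposal is correct and follows essentially the same route as the paper's own proof: testing with $v_h = U_h^{\mathrm{m},n}$ to obtain the energy identity with the unfavourable term $\tfrac{1}{2\Delta t}\|U^{n+1}-U^n\|_0^2$, bounding that term via Lemma~\ref{lem:DiscreteAux1} and the Young split $(a+b)^2 \leq (1+\tfrac{1}{\epsilon})a^2 + (1+\epsilon)b^2$, absorbing the resulting gradient contribution through the coercivity constant $c_c$ using exactly the CFL calibration $\tfrac{c_s c_i^2(1+\epsilon)c_b^2}{2} = c_c - c_\alpha^2\delta$, handling the source term with the embedding~\eqref{eq:Kufner_v2} and the $\delta$-weighted Young inequality, and telescoping. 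The only (immaterial) difference is presentational: you absorb the explicit-term gradient first and reserve the surplus $c_\alpha^2\delta\,\|\nabla U^n\|_0^2$ for the source, whereas the paper collects all gradient terms and compares them with $c_c$ in one step.
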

\begin{proof}
We set $v_h = U^{\text{m}, n}_h$ in~\eqref{eq:EETimeStepping}. Notice that
\begin{align*}
U^{\text{m},n}_h = \frac{U^{\text{m}, n+1}_h + U^{\text{m},n}_h}{2} - \frac{\Delta t}{2} \frac{U^{\text{m}, n+1}_h - U^{\text{m}, n}_h}{\Delta t}.
\end{align*}
Hence
\begin{align}\label{eq:StabilityAux1}
\frac{\| U^{\text{m},n+1}_h\|^2_0 - \|U^{\text{m},n}_h\|^2_0}{2\Delta t} + a_h (U^{\text{m},n}_h, U^{\text{m},n}_h) = \big< f(t_n), U^{\text{m},n}_h\big> + \frac{\Delta t}{2}\Big\|  \frac{U^{\text{m}, n+1}_h - U^{\text{m}, n}_h}{\Delta t} \Big\|^2_0.
\end{align}

Note that for any numbers $\epsilon, a,b >0$ we have
\begin{align*}
(a + b )^2 \leq \big(1 + \frac{1}{\epsilon} \big) a^2 + (1 + \epsilon ) b^2.
\end{align*}
Therefore, Lemma~\ref{lem:DiscreteAux1} gives us

\begin{align}\label{eq:StabilityAux2}
\Big\|  \frac{U^{\text{m}, n+1}_h - U^{\text{m}, n}_h}{\Delta t} \Big\|^2_0 \leq c_i^2 h^{-2} c_\alpha^2\big(1 + \frac{1}{\epsilon} \big) \|f(t_n) \|^2_\alpha + c_i^2 h^{-2} c_b^2(1 + \epsilon ) \| \nabla U^{\text{m},n}_h \|_0^2.
\end{align}

Furthermore, for any $\delta > 0$ we get due to the Cauchy-Schwarz inequality and Equation~\eqref{eq:Kufner_v2}
\begin{align}\label{eq:StabilityAux3}
\big< f(t_n), U^{\text{m},n}_h \big>  \leq \frac{1}{\delta}\|f(t_n) \|^2_\alpha + c_\alpha^2 \delta \| \nabla U^{\text{m},n}_h \|_0^2.
\end{align}

Using the coercivity of the bilinear form~$a_h(\cdot, \cdot)$~\eqref{eq:ECBoundedCoercive} and applying~\eqref{eq:StabilityAux1}--\eqref{eq:StabilityAux3}, we obtain

\begin{align*}
\frac{\| U^{\text{m},n+1}_h\|^2_0 - \|U^{\text{m},n}_h\|^2_0}{2\Delta t} + c_c \| \nabla U^{\text{m},n}_h \|_0^2 \leq \Big( \frac{1}{\delta} &+ \frac{1}{2}\big(1 + \frac{1}{\epsilon}\big) c_i^2 c_\alpha^2 h^{-2}\Delta t \Big) \| f(t_n) \|_\alpha^2 \\
&+ \Big( c_\alpha^2 \delta +\frac{1}{2} c_i^2 c_b^2( 1 + \epsilon ) h^{-2} \Delta t \Big)  \| \nabla U^{\text{m},n}_h \|_0^2.
\end{align*}

The CFL condition~\eqref{eq:CFLCondition} states that
\begin{align*}
h^{-2}\Delta t \leq c_s = 2\frac{c_c - c_\alpha^2 \delta}{c_i^2 c_b^2 (1 + \epsilon)}
\end{align*}
and therefore
\begin{align*}
\frac{\| U^{\text{m},n+1}_h\|^2_0 - \|U^{\text{m},n}_h\|^2_0}{2\Delta t} \leq \Big( \frac{1}{\delta} + \frac{1}{2}\big(1 + \frac{1}{\epsilon}\big) c_i^2 c_b^2 h^{-2}\Delta t \Big) \| f(t_n) \|_\alpha^2
\end{align*}
Setting
\begin{align*}
c_{\epsilon, \delta} =  2 \Big(\frac{1}{\delta} + \frac{1}{2}\big(1 + \frac{1}{\epsilon}\big) c_i^2 c_b^2 c_s\Big)
\end{align*}
and applying induction we finally obtain
\begin{align*}
\| U^{\text{m}, n}_h \|_0^2 \leq \| U^{\text{m}, 0}_h \|_0^2 + c_{\epsilon, \delta} \Delta t \sum_{k=0}^{n-1} \| f(t_n) \|^2_\alpha.
\end{align*}

\end{proof}
Upon the right choice of the values~$\epsilon, \delta$, we see that any $c_s < 2\frac{c_c}{c_i^2 c_b^2}$ is a feasible stability constant. Note however that when $\delta \rightarrow 0$ or $\epsilon \rightarrow 0$, then $c_{\epsilon, \delta} \rightarrow \infty$.

Finally, we are in a position to state the convergence result for the fully discrete scheme.

\begin{thm}\label{thm:FDMainResultParabolic}
Suppose that functions $u_0$ and $f$ satisfy the regularity requirements stated in Theorem~\ref{thm:MainResult} and let $1 - \lambda_1 < \alpha < 1$. Suppose also that the CFL condition stated in Definition~\ref{def:CFLCondition} holds with the constant $c_s = 2\frac{c_c -  c_\alpha^2 \delta}{c_i^2 c_b^2(1 + \epsilon)}$ for some $0 < \epsilon< 1/2$, $0< \delta < \frac{c_c}{c_\alpha^2}$. Then, the following error estimate for the energy-corrected discretisation $U^{\text{m},n}_h$, see~\eqref{eq:EETimeStepping}, of Problem~\eqref{eq:HeatEquation} holds for some~$c>0$ independent of~$u$
\begin{align}\label{eq:FDMainResultParabolic}
\max_{0 \leq n \leq N}\| u(t_n) &- U_h^{\mathrm{m},n}\|_{\alpha} \\
&\leq c ( h^2  + \Delta t)  \bigg( \max_{0\leq t\leq T}\| \Delta u (t) \|^2_{-\alpha} + \int_0^T \| \Delta u_t(t) \|_{-\alpha}^2 \; \mathrm{d}t  +   \int_0^T \| u_{tt}\|^2_\alpha \; \mathrm{d}t\bigg)^{1/2}.\nonumber
\end{align}
\end{thm}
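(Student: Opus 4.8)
The plan is to follow the same Thom\'ee-type splitting already used in the semi-discrete Theorem~\ref{thm:MainResult}, now carrying along the additional temporal error. At each time level I split
\[
u(t_n) - U_h^{\text{m},n} = \big(u(t_n) - R_h^{\text{m}} u(t_n)\big) + \big(R_h^{\text{m}} u(t_n) - U_h^{\text{m},n}\big) =: \rho^n + \eta^n,
\]
where $R_h^{\text{m}}$ is the energy-corrected Ritz projection \eqref{eq:ModifiedRitzProjection}. The projection part $\rho^n$ is purely spatial and is controlled exactly as in Theorem~\ref{thm:MainResult}: Theorem~\ref{thm:EnergyCorrectioOptConvergence} gives $\|\rho^n\|_\alpha \leq ch^2\|\Delta u(t_n)\|_{-\alpha}$, hence $\max_n\|\rho^n\|_\alpha \leq ch^2\max_t\|\Delta u(t)\|_{-\alpha}$. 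Since both the continuous and the discrete initial data use $R_h^{\text{m}} u_0$, the initial error vanishes, $\eta^0 = 0$.

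Next I derive the discrete error equation for $\eta^n \in V_h$. Starting from the scheme \eqref{eq:EETimeStepping}, using the definition of the Ritz projection \eqref{eq:ModifiedRitzProjection} to replace $a_h(R_h^{\text{m}} u(t_n), v_h)$ by $a(u(t_n), v_h)$, and then the weak form \eqref{eq:HeatEquationWeakSolution} at $t=t_n$ tested with $v_h$, I expect to obtain
\[
\Big\langle \frac{\eta^{n+1} - \eta^n}{\Delta t}, v_h\Big\rangle + a_h(\eta^n, v_h) = \langle \omega^n, v_h\rangle,
\]
with a consistency defect $\omega^n = \omega_1^n + \omega_2^n$ that cleanly separates the spatial and temporal contributions,
\[
\omega_1^n = (R_h^{\text{m}} - I)\,\frac{u(t_{n+1}) - u(t_n)}{\Delta t}, \qquad \omega_2^n = \frac{u(t_{n+1}) - u(t_n)}{\Delta t} - u_t(t_n).
\]

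This error equation is exactly of the form covered by the fully discrete stability estimate Theorem~\ref{thm:ECParabolicStability} with forcing $\omega^n$, which under the stated CFL condition yields $\|\eta^n\|_0^2 \leq c_{\epsilon,\delta}\,\Delta t\sum_{k=0}^{n-1}\|\omega^k\|_\alpha^2$; since $\alpha>0$ and $\Omega$ is bounded the weight $r^\alpha$ is bounded, so $\|\eta^n\|_\alpha \lesssim \|\eta^n\|_0$. It then remains to estimate the two defects in the $L^2_\alpha$-norm. For $\omega_1^n$ I commute the Ritz projection with the difference quotient and apply Theorem~\ref{thm:EnergyCorrectioOptConvergence} to $\Delta t^{-1}\int_{t_n}^{t_{n+1}}u_t\,\mathrm{d}t$, giving $\|\omega_1^n\|_\alpha \leq ch^2\,\Delta t^{-1}\int_{t_n}^{t_{n+1}}\|\Delta u_t\|_{-\alpha}\,\mathrm{d}t$; a Cauchy--Schwarz step in time then telescopes $\Delta t\sum_k\|\omega_1^k\|_\alpha^2$ into $ch^4\int_0^T\|\Delta u_t\|_{-\alpha}^2\,\mathrm{d}t$. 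For the temporal truncation $\omega_2^n$ I use Taylor's theorem with integral remainder, $\omega_2^n = \Delta t^{-1}\int_{t_n}^{t_{n+1}}(t_{n+1}-s)\,u_{tt}(s)\,\mathrm{d}s$, so that $\|\omega_2^n\|_\alpha \leq \int_{t_n}^{t_{n+1}}\|u_{tt}\|_\alpha\,\mathrm{d}s$ and a second Cauchy--Schwarz step gives $\Delta t\sum_k\|\omega_2^k\|_\alpha^2 \leq \Delta t^2\int_0^T\|u_{tt}\|_\alpha^2\,\mathrm{d}s$. Collecting $\rho^n$ and $\eta^n$ and absorbing the three square-root factors into a single $(h^2 + \Delta t)$ prefactor via the elementary inequality $X+Y+Z \leq \sqrt3\,(X^2+Y^2+Z^2)^{1/2}$ produces \eqref{eq:FDMainResultParabolic}.

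I expect the main obstacle to be the bookkeeping in the two consistency defects: isolating the spatial part $\omega_1^n$ so that it carries $h^2$ while the purely temporal part $\omega_2^n$ carries $\Delta t$, and arranging the two Cauchy--Schwarz-in-time steps so that the discrete sums collapse precisely into the integral norms $\int_0^T\|\Delta u_t\|_{-\alpha}^2$ and $\int_0^T\|u_{tt}\|_\alpha^2$ on the right-hand side, with the clean powers $h^4$ and $\Delta t^2$. A secondary point worth stating explicitly is that Theorem~\ref{thm:ECParabolicStability} is phrased in the unweighted $\|\cdot\|_0$ while the target is the weighted $\|\cdot\|_\alpha$; this is harmless because $\alpha>0$ makes $r^\alpha$ bounded. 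Finally, the regularity hypotheses inherited through Theorem~\ref{thm:MainResult} from Theorem~\ref{thm:RegularityU} guarantee that $\int_0^T\|\Delta u_t\|_{-\alpha}^2$ and $\int_0^T\|u_{tt}\|_\alpha^2$ are finite, so the right-hand side is well defined.
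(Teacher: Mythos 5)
Your proposal matches the paper's own proof essentially step for step: the same $\rho^n+\eta^n$ splitting via the modified Ritz projection $R_h^{\mathrm{m}}$, the same discrete error equation with the two defects (the temporal Taylor remainder and the Ritz-projection error of the difference quotient), the same application of the fully discrete stability result of Theorem~\ref{thm:ECParabolicStability} with $\eta^0=0$, and the same Cauchy--Schwarz-in-time steps producing the $h^4\int_0^T\|\Delta u_t\|_{-\alpha}^2$ and $(\Delta t)^2\int_0^T\|u_{tt}\|_\alpha^2$ contributions. Your explicit observation that $\|\cdot\|_\alpha\lesssim\|\cdot\|_0$ for $\alpha>0$ on a bounded domain (needed because the stability theorem is stated in the unweighted norm while the target estimate is weighted) is a detail the paper passes over silently; otherwise the two arguments coincide.
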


\begin{proof}
Similarly as in the proof of Theorem~\ref{thm:MainResult}, we begin the proof by splitting the error into two components.
\begin{align}\label{eq:FDErrorSplitting}
 u(t_n) - U_h^{\text{m},n} = \Big( u(t_n) - R^\text{m}_h u(t_n) \Big) + \Big(R^\text{m}_h u(t_n) - U_h^{\text{m},n}\Big)  =: \rho^n + \eta^n,
\end{align}
where $R_h^m$ denotes the energy-corrected Ritz projection defined in Equation~\eqref{eq:ModifiedRitzProjection}. Due to Theorem~\ref{thm:EnergyCorrectioOptConvergence}

\begin{align}\label{eq:FDConvOfRho1}
\| \rho^n \|_{\alpha}  &= \| u(t_n) - R^\text{m}_h u(t_n) \|_{\alpha} \leq ch^2 \| \Delta u(t_n) \|_{-\alpha}
\end{align}

We focus now on estimating the remaining~$\eta^n$ component of the error. Due to the definition of the energy-corrected Ritz projection and the problem formulation~\eqref{eq:HeatEquationWeakSolution}, we have
\begin{align*}
\big<u_t (t_n), v_h \big> + a_h (R^\text{m}_h u(t_n), v_h) = \big< f(t_n) , v_h\big>, \quad \text{for all } v_h \in V_h.
\end{align*}
Therefore, equation~\eqref{eq:EETimeStepping} yields for all~$v_h \in V_h$

\begin{align*}
a_h ( \eta^n, v_h) &= \Big< \frac{U^{\text{m}, n+1}_h - U^{\text{m},n}_h}{\Delta t} - u_t(t_n), v_h\Big> \\
&= -\Big< \frac{R^\text{m}_h u(t_{n+1}) - R^\text{m}_h u(t_n)}{\Delta t} - \frac{U^{\text{m}, n+1}_h - U^{\text{m}, n}_h}{\Delta t}, v_h \Big> \\
&\hspace{3cm}-\Big< u_t(t_n) - \frac{R^\text{m}_h u(t_{n+1}) - R^\text{m}_h u(t_n)}{\Delta t} , v_h \Big>\\
&= - \Big< \frac{\eta^{n+1} - \eta^n}{\Delta t}, v_h \Big> + \Big< \frac{u(t_{n+1}) - u(t_n)}{\Delta t} - u_t (t_n) , v_h \Big> - \Big< \frac{\rho^{n+1} - \rho^n}{\Delta t}, v_h \Big>.
\end{align*}

Thus, we can write
\begin{align*}
\Big< \frac{\eta^{n+1} - \eta^n}{\Delta t}, v_h \Big>  + a_h ( \eta^n, v_h) = \big< \psi_1^n + \psi_2^n , v_h\big>, \quad \text{for all }v_h \in V_h,
\end{align*}
where
\begin{align*}
\psi_1^n = \frac{u(t_{n+1}) - u(t_n)}{\Delta t} - u_t (t_n), \quad \text{and } \quad \psi_2^n = \frac{\rho^{n+1} - \rho^n}{\Delta t}.
\end{align*}

Thanks to the stability estimate stated in Theorem~\ref{thm:ECParabolicStability} we obtain

\begin{align}\label{eq:FDConvOfRho3}
\| \eta^n \|^2 \leq \| \eta^0 \| + 2\Delta t c_{\epsilon, \delta}\Big( \sum_{k = 0}^{n-1} \| \psi_1^n \|^2_\alpha + \sum_{k = 0}^{n-1} \| \psi_2^n \|^2_\alpha \Big).
\end{align}

We now estimate $\psi_1^n$ and $\psi_2^n$ separately. Note that
\begin{align*}
\psi_1^n = \frac{u(t_{n+1}) - u(t_n)}{\Delta t} - u_t (t_n) = -\frac{1}{\Delta t} \int_{t^n}^{t^{n+1}} (t_{n+1} - t ) u_{tt}\; \mathrm{d}t,
\end{align*}
and hence
\begin{align}\label{eq:FDConvOfRho4}
\| \psi_1^n \|_\alpha \leq  \sqrt{\Delta t} \Big( \int_{t^n}^{t^{n+1}} \| u_{tt}\|^2_\alpha \; \mathrm{d}t\Big)^{1/2}.
\end{align}

Further, due to the linearity of the modified Ritz projection, we have
\begin{align*}
\psi^n_2 = \frac{u(t_{n+1}) - u(t_n)}{\Delta t} - R^\text{m}_h \frac{u(t_{n+1}) - u(t_n)}{\Delta t},
\end{align*}
and thus, see Theorem~\ref{thm:EnergyCorrectioOptConvergence}, we also get
\begin{align*}
\| \psi_2^n \|_\alpha &\leq ch^2 \Big\| \Delta \Big( \frac{u(t_{n+1}) - u(t_n)}{\Delta t} \Big) \Big\|_{-\alpha} = ch^2 \Big\| \frac{1}{\Delta t} \int_{t_n}^{t_{n+1}} \Delta u_t \; \mathrm{d} t \Big\|_{-\alpha}.
\end{align*}
Further, applying Cauchy-Schwarz inequality, we obtain
\begin{align}\label{eq:FDConvOfRho5}
\| \psi_2^n \|_\alpha \leq c \frac{h^2}{\sqrt{\Delta t}} \Big( \int_{t_n}^{t_{n+1}} \|\Delta u_t \|^2_{-\alpha}\; \mathrm{d} t \Big)^{1/2}.
\end{align}

Since the initial conditions in the discretisation are imposed using the modified Ritz projection, see~\eqref{eq:EETimeStepping}, we automatically have $\eta^0 = 0$. Using this and combining~\eqref{eq:FDConvOfRho3} with \eqref{eq:FDConvOfRho4} and~\eqref{eq:FDConvOfRho5}, we arrive at

\begin{align*}
\| \eta^n \|^2 \leq c (\Delta t)^2  \int_0^T \| u_{tt}\|^2_\alpha \; \mathrm{d}t  + c h^4 \int_0^T \|\Delta u_t \|^2_{-\alpha}\; \mathrm{d} t
\end{align*}
Finally, combining this result with~\eqref{eq:FDConvOfRho1}, and applying to the error splitting~\eqref{eq:FDErrorSplitting} we get

\begin{align*}
\max_{0\leq n \leq N}\| u(t_n) &- U_h^{\text{m},n}\|_\alpha^2 \\
&\leq c \big( h^4 + (\Delta t)^2  \big) \Big( \max_{0\leq n \leq N} \| \Delta u(t_n) \|^2_{-\alpha}  + \int_0^T \|\Delta u_t \|^2_{-\alpha}\; \mathrm{d} t  +   \int_0^T \| u_{tt}\|^2_\alpha \; \mathrm{d}t\Big)
\end{align*}

Hence, the proof is completed upon taking the square root of both sides of the inequality. The boundedness of the right-hand side is ensured by Theorem~\ref{thm:RegularityU}.

\end{proof}

As opposed to the mesh grading strategy, the energy-correction works on uniform meshes with less restrictive CFL stability condition~\eqref{eq:CFLCondition}. We shall exploit this fact further in the next section, when creating fast time-stepping schemes.
\section{Numerical results}\label{sec:NumericalResults}
In this section, we propose and numerically investigate a fast solver for parabolic problems based on energy-corrected finite element~\eqref{eq:FEMParabolic}. We show that, as opposed to the algorithms involving mesh grading and adaptivity, explicit time-stepping schemes are a feasible choice in the proposed setting.

We discretize Eq.~\eqref{eq:FEMParabolic} using the Explicit Euler time-stepping scheme. Let us divide the time interval $[0, T]$ into $N \in \mathbb{Z}_+$ time steps of equal lengths $\Delta t$, so $t_n = n \Delta t$. The fully discrete approximation of the model problem~\eqref{eq:HeatEquationWeakSolution} reads as follows: Find $u_h^{m, n} \in V_h$ for $0 \leq n \leq N$ such that
\begin{align}\label{eq:EETimeStepping}
\Big( \frac{u_h^{m, n + 1} - u_h^{m, n}}{\Delta t}, v_h \Big) + a_h(u_h^{m, n}, v_h) = \big(f (t_n), v_h \big), \quad \text{for all } v_h \in V_h.
\end{align}

Let $\big\lbrace \varphi_i \big\rbrace_{i = 1}^K$ be a standard nodal finite element basis. The discrete systems~\eqref{eq:EETimeStepping} can be rewritten in a matrix-vextor formulation as
\begin{align}\label{eq:P1withEEDiscrete}
\mathbf{U}_{n + 1}^m = \mathbf{U}_n^m + \Delta t \mathbf{M}^{-1}\Big[ \mathbf{F}^n - \mathbf{S}^m \mathbf{U}_n^m   \Big],
\end{align}
where $\mathbf{M} =\Big[ \big(  \varphi_i, \varphi_j \big)\Big]_{i,j=1}^K$ and $\mathbf{S}^m =\Big[ a_h(  \varphi_i, \varphi_j \big)\Big]_{i,j=1}^K$ denote the standard mass and energy-corrected stiffness matrices respectively, and $\mathbf{F}^n = \Big[ \big(  f(t_n, \cdot), \varphi_i \big)\Big]_{i=1}^K$. The choice of a nodal, vertex-based quadrature rule for assembling the mass matrix leads to a lumped diagonal matrix~$\widetilde{\mathbf{M}}$, which can be used in place of~$\mathbf{M}$, see~\cite{Bartels_book} for more details. This results in a fast time-stepping scheme, where at each time step, multiplication by a diagonal matrix $\widetilde{\mathbf{M}}$ and a sparse matrix~$\mathbf{S}^m$ needs to be performed.

Stability of the Explicit Euler scheme is guaranteed by the CFL condition~\cite{Thomee2006}, meaning that the size of the time step needs to scale like the square of the mesh size, i.e. $\Delta t \sim h^2$. This is very prohibitive when mesh grading or adaptivity is concerned. However, this is not an issue in the case of the energy-corrected FEM, which works on uniform meshes. Then, balancing the error of order $\mathcal{O}(\Delta t)$ coming from the time-stepping discretization with $\mathcal{O}(h^2)$ order of error measured in the weighted $L^2$-norm, see Thm.~\ref{thm:MainResult}, exactly the same relationship needs to be kept.

In order to improve the convergence of the scheme at a fixed point in time $T$, we complete the algorithm with a post-processing strategy, following the post-processing strategy as in~\eqref{eq:PostProcessing}. As stated in Eq.~\eqref{eq:StressIntensityFactor}, the stress-intensity factor, see Thm.~\ref{thm:Regularity}, can be computed by
\begin{align*}
k_1(T) = -\frac{1}{\pi}\int_\Omega\big( f(T)  - u_t(T) \big)s_{-1} + u \Delta s_{-1}.
\end{align*}
We define its discrete approximation using~\eqref{eq:StressIntensityFactorDiscrete} as
\begin{align*}
k_1^h(T) = -\frac{1}{\pi}\int_\Omega\bigg( f(T) - \frac{u^{m, N}_h - u^{m, N-1}_h}{\Delta t} \bigg)s_{-1} + u_h^m(T) \Delta s_{-1}.
\end{align*}
This leads to the post-processed solution of the form
\begin{align}\label{eq:PostProcessedParabolic}
\widetilde{u}_h^m(T) = u_h^m(T) + k_1^h(T)\big( s_1 - s_{1, h}^m  \big).
\end{align}
Note that the additional cost of performing the post-processing is equal to the cost of solving one additional elliptic equation and evaluating one integral.

In Table~\ref{tab:ConvergenceRates}, we summarise the errors and the convergence rates of the proposed scheme. We choose the L-shape domain $\Omega = (-1, 1)^2 \setminus \big( [0, 1]\times [-1, 0]\big)$ with the largest interior angle of size $\Theta = 3\pi/2$. We also choose a known exact solution $u= \sin(t) s_1 + \sin (2t) s_2 - \sin (3t) s_3$, being a linear combination of singular functions~\eqref{eq:SingularFunctions} with smooth time-dependent coefficients.

The parameter~$\gamma$ in the modification~\eqref{eq:ECModification} is computed using a version of the Newton algorithm described in~\cite{Ruede2014} and in the numerical experiments we choose the weight $\alpha = 1 - \pi/\Theta$. This choice of the weight induces a slightly stronger norm than assumed in Sec.~\ref{Sec:FEM_parabolic} but the optimal convergence order of the energy-corrected scheme can be observed regardless of this. 

We consider uniform refinement of the initial mesh and together with refining the mesh, we also divide the time-step $\Delta t$ by $4$, initially set to be equal to~$0.1$. For the purpose of comparison, in the first two columns of Table~\ref{tab:ConvergenceRates}, we summarise the results obtained using the uncorrected scheme. The suboptimal convergence rates in the sense of the best approximation error, when measured both in standard and weighted $L^2(\Omega)$-norms, are in line with the results in~\eqref{eq:ConvStFEMParabolic}. For the energy-corrected scheme~\eqref{eq:EETimeStepping}, we see that no pollution in the $L^2(\Omega)$-norm appears. Moreover, second-order convergence in the weighted norm means that the error is relatively large only in the vicinity of the re-entrant corner, so the pollution effect from Thm.~\ref{thm:PollutionEffect} has been removed. Finally, the post-processing approach yields second-order convergence in the standard $L^2(\Omega)$-norm. Numerical tests confirm the theoretical results of Theorem~\ref{thm:MainResult}.
\begin{table}[htb]
\centering
\resizebox{\textwidth}{!}{%
\begin{tabular}{c|cc|cc|cc|cc|cc}
L & $\| u - u_h\|_0$ & rate & $\| u - u_h \|_{0, \alpha}$ & rate &$\| u - u^m_h\|_0$ & rate & $\| u - u^m_h \|_{0, \alpha}$ & rate & $\| u - \tilde{u}^m_h \|_{0}$ & rate \\
\hline
1 &   9.9471e-02 &   &          7.7723e-02 &  &          1.0172e-01 &   &          7.9832e-02 &  &             6.8808e-02 &   \\
2 &     3.3940e-02 & 1.55 &          2.3263e-02 & 1.74 &       3.2843e-02 & 1.63  &       2.2848e-02    & 1.80 &        2.2432e-02 & 1.62   \\
3 &       1.2351e-02 & 1.46  &          7.6646e-03 & 1.60 &     9.5573e-03 & 1.78 &       5.6370e-03    & 2.02  &            5.7524e-03 & 1.96 \\
4 &        4.6633e-03 & 1.41  &          2.7192e-03 & 1.50  &      2.7736e-03 & 1.78  &     1.3492e-03      & 2.06    &          1.3562e-03 & 2.08  \\
5 &        1.7942e-03 & 1.38  &          1.0130e-03 & 1.42 &      8.2665e-04 & 1.74 &      3.2694e-03     & 2.04       &       3.1236e-04 & 2.12  \\
6 &        6.9729e-04 & 1.36  &          3.8825e-04 & 1.38  &      2.5229e-04 & 1.71  &          8.0793e-05 & 2.02 &                   7.1894e-05 & 2.12  \\
\end{tabular}
}
\caption{Summary of convergence rates obtained using two different approximations of the heat equation on the L-shape domain}\label{tab:ConvergenceRates}
\end{table}
\section{Extensions}\label{sec:Extensions}
In this section, we present extensions of the methods introduced above. We show that the energy-corrected finite element can be applied to domains with multiple re-entrant corners, also in the presence of a moderate advection in the problem~\eqref{eq:HeatEquation}. Furthermore, we show a possible extension to the higher-order piecewise polynomial finite elements and propose a fast explicit time-stepping scheme based on cubic elements combined with mass-lumping techniques. Finally, in order to show the flexibility of the energy-correction method, we present numerical experiments involving multiple re-entrant corners in three dimensions.

\subsection{Advection-diffusion equation}
In~\cite{Swierczynski2018}, pointwise error estimates for the energy-corrected finite element method for the elliptic problems on polygonal domains were studied. It was shown that the energy-corrected discretisation~\eqref{eq:ECFiniteElementModified} of~\eqref{eq:EllipticModelProblem} yields, up to a logarithmic factor, optimal convergence in the sense of the optimal approximation property. In the following, we show that the improved pointwise convergence of the energy-corrected scheme can be also expected in the case of parabolic problems.

We consider the following advection-diffusion problem
\begin{align}\label{eq:AdvectionDiffusion}
u_t + b\cdot \nabla u - \Delta u &= f \quad \text{in } \Omega\times (0, T),\\
u &= 0 \quad \text{on } \partial \Omega\times [0, T],\\
u &= u_0 \quad \text{in } \Omega \text{ at } t = 0 .
\end{align}
In the numerical example, we consider $T = 1$, $u_0 = 0$ and $b = (1, 1)$, so the problem is equipped with moderate advection. The computational domain $\Omega$, together with its triangulation, is presented in Figure~\ref{fig:Triangle} and consists of a rectangle $(0, 4)\times (0,3)$ with a right, isosceles triangle cut out. There are three re-entrant corners in the domain~$\Omega$, two of sizes~$7\pi/4$ and one of size~$3\pi/2$. We use a computational grid with one-element patches around the singular corners consisting of the identical isosceles triangles. For the right-hand side we choose~$f = \sin(\pi t)\Big( (x - 2)^2 + (y - 3/2)^2 \Big)^{-1}$, which has a singularity in the middle of the cut-out triangle. For the time discretisation, we choose Explicit Euler time-stepping as described in Section~\ref{sec:NumericalResults} with an initial step-size $\Delta t = 0.02$, which is small enough to guarantee the stability of the scheme.

We investigate the behaviour of a quantity of interest $\text{QoI} = \| u_h (T) \|_{L^\infty(\Omega)}$ for standard finite element method and the energy-corrected finite element method on $5$ consecutive refinement levels. We use the same modification of the bilinear form as before, namely~\eqref{eq:ECModification}. The results of the simulations are summarised in the plot on the right-hand side of Figure~\ref{fig:Triangle}. For completeness, we also include the extrapolated approximation $\| u (T)\|_\infty^{ex}$ of the real value in the plot. The estimated order of convergence of $| \| u(T)\|_\infty^{ex} - \| u_h (T) \|_\infty |$ is equal to $1.85$ and $1.55$ in the case of the energy-corrected scheme and standard finite element respectively. The energy-corrected finite element can be successfully applied also in the cases of several different re-entrant corners in the domain and the presence of a moderate advection in the problem.

\begin{figure}
\centering
\begin{minipage}{0.46\textwidth}
\huge
\resizebox{\textwidth}{!}{
\includegraphics{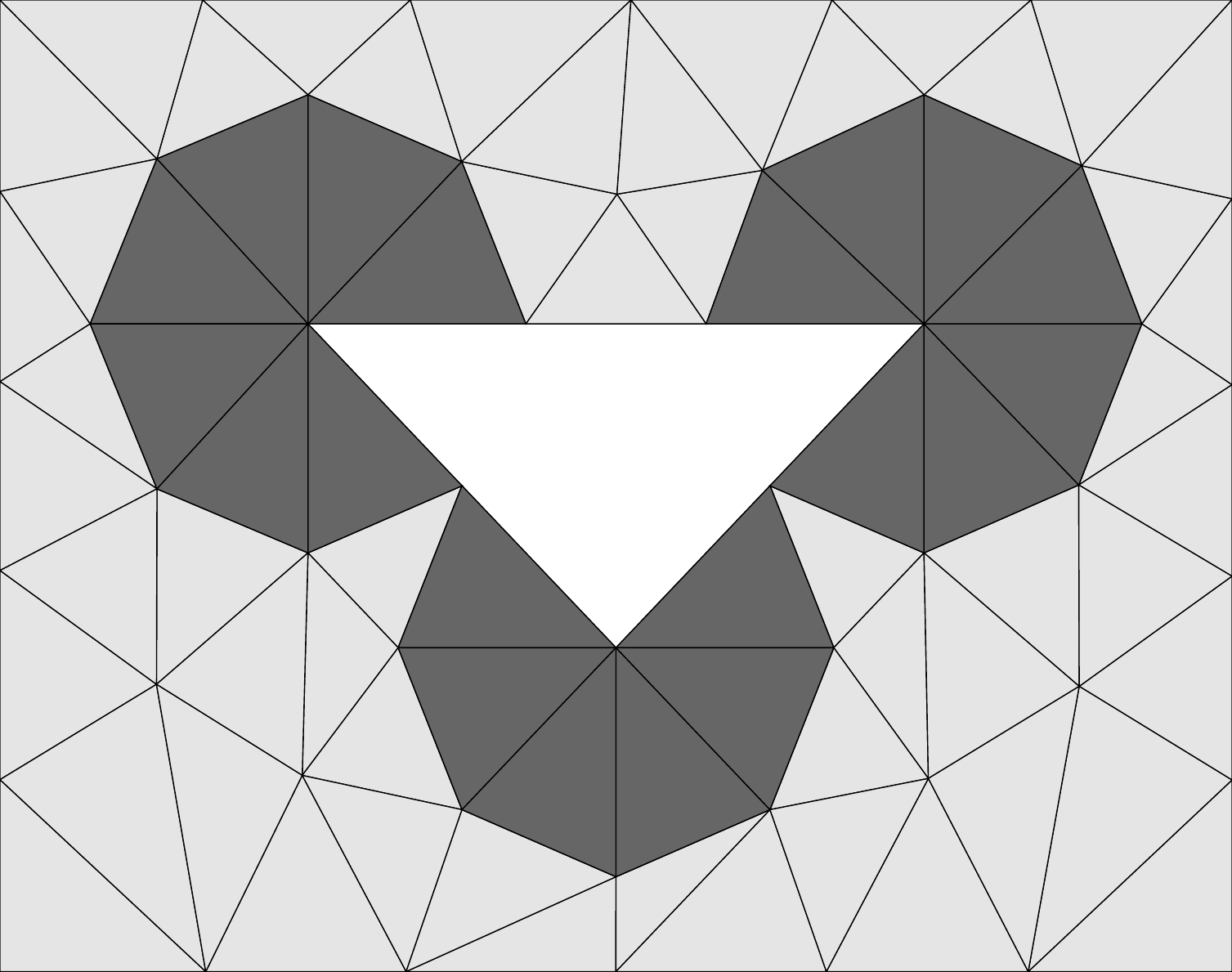}
}
\end{minipage}
\begin{minipage}{0.51\textwidth}
\huge
\resizebox{\textwidth}{!}{
\includegraphics{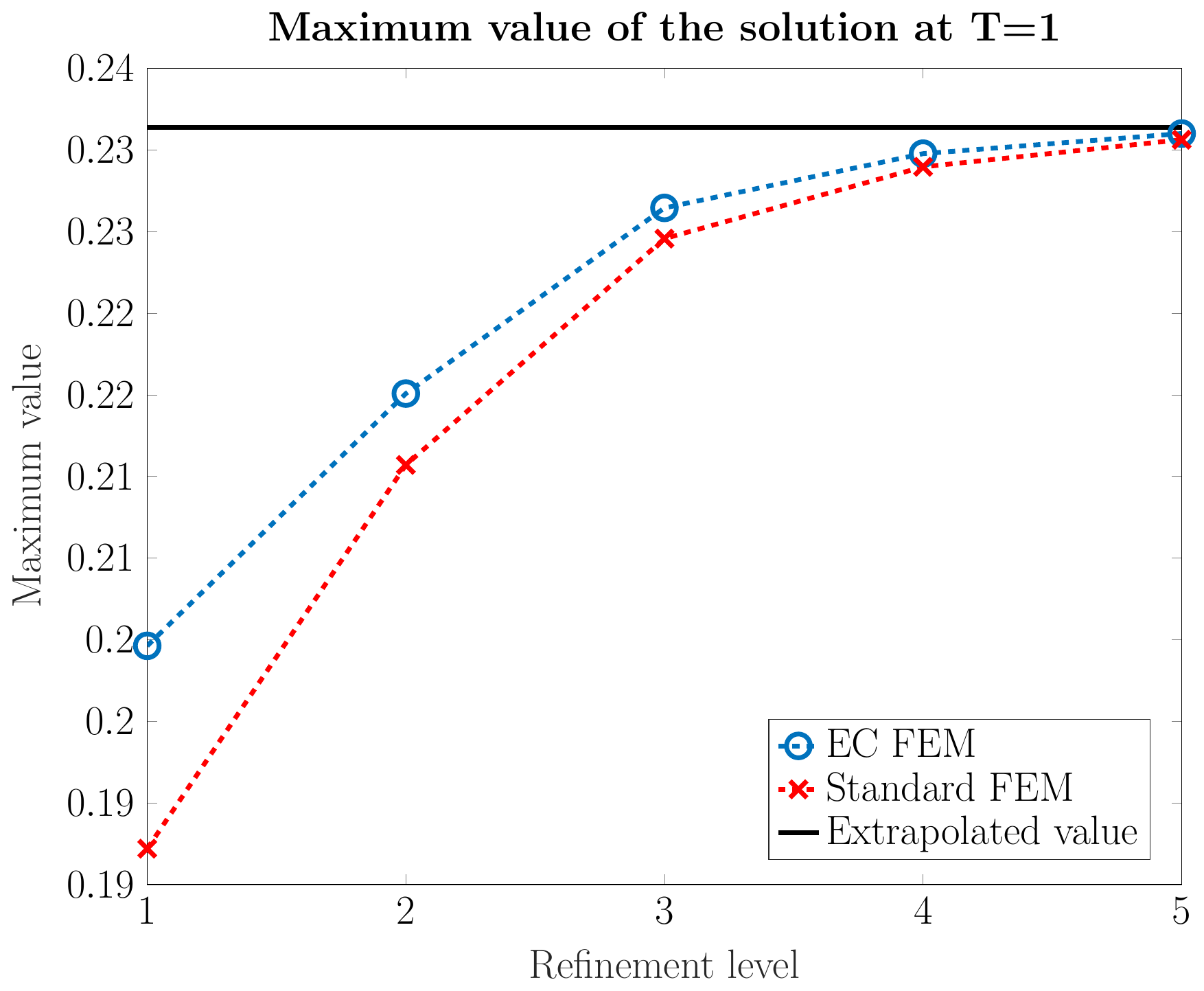}
}
\end{minipage} 
\caption{On the left-hand side a computational domain consisting of a rectangle $(0, 4)\times (0,3)$ with a right-angled, isosceles triangle cut out of it is shown. The domain is triangulated so that one element patches around the re-entrant corners consist of identical isoscles triangles. On the right-hand side a convergence of a computed quantity of interest - maximum value in the domain, is shown.}\label{fig:Triangle}
\end{figure}

\subsection{Piecewise polynomial FEM}\label{sec:HO}
In~\cite{Horger2016HO} ideas presented in Section~\ref{sec:EnergyCorrection} were extended to a more general setting of piecewise polynomial finite elements. Optimal convergence of the approximation~\eqref{eq:ECFiniteElementModified} of the model Poisson problem~\eqref{eq:EllipticModelProblem}, when using $k$-th order polynomial basis functions, is then obtained upon the choice of $f\in H^{k-1}_{-\alpha}(\Omega)$, and one of the modifying bilinear functions
\begin{align}\label{eq:ModificationHO}
c^R_h(u, v) := \sum_{i = 1}^K \gamma^R_i \int_{\omega^i_h} \nabla u \cdot \nabla v \; \mathrm{d} x, \quad c^F_h(u, v) := \sum_{i = 1}^K \gamma^F_i \int_{\omega^1_h} \hat{r}^{i-1}\nabla u \cdot \nabla v \; \mathrm{d} x.
\end{align}
Here, $\omega^i_h$ denotes the $i$-th layer of elements, counting from the considered corner and $\hat{r}$ is a distance from the corner measured on the reference triangle. We also assume that the patch of elements around the corner consists of identical isosceles triangles. 
Asymptotically, unique optimal sequences of parameters $\underline{\gamma}^{R*} = \big(\gamma^{R*}_i\big)_{i=1}^K $ and $\underline{\gamma}^{F*}=\big(\gamma^{F*}_i\big)_{i=1}^K$ on a given correction patch exist.

We now focus our attention on cubic basis functions, which means that for $f\in H^2_{-\alpha}(\Omega)$ we have
\begin{align*}
\| u - u_h^{\text{m}} \|_{2 + \alpha} \leq c h^4 \| f\|_{2, -\alpha}, \quad \text{and } \| \nabla ( u - u_h^{\text{m}}) \|_{2 + \alpha} \leq c h^3 \|f \|_{2, -\alpha}.
\end{align*}
We apply the method to the parabolic problem~\eqref{eq:HeatEquationWeakSolution} in a straightforward manner, suitably modifying the formulation~\eqref{eq:FEMParabolic}.

We are aiming to construct a fast, explicit solver and this means that, in order to guarantee the stability of the method, the CFL condition $\Delta t\sim h^2$ needs to be satisfied. The use of cubic finite element basis yields fourth-order convergence in weighted $L^2(\Omega)$-norm, and we would like to balance it with a second-order time-stepping scheme. Similarly to the piecewise linear case discussed in Section~\ref{sec:NumericalResults}, optimal balancing of the errors stemming from the time and space discretisations means that the CFL condition is automatically satisfied and makes the use of explicit time-stepping scheme feasible. We use the second-order Runge-Kutta scheme, also known as the Heun's method. The fully discrete scheme can be written as
\begin{align}\label{eq:HOwithRK2}
 \tilde{\mathbf{U}}^{\text{m}}_{n+1} &= \mathbf{U}^{\text{m}}_{n} +\Delta t \mathbf{M}^{-1} \Big[ \mathbf{F}^n - \mathbf{S}^{\text{m}} \mathbf{U}^{\text{m}}_{n} \Big]\\
\mathbf{U}^{\text{m}}_{n+1} &= \mathbf{U}^{\text{m}}_{n} +\frac{1}{2}\Delta t \mathbf{M}^{-1} \Big[ \big(\mathbf{F}^n - \mathbf{S}^{\text{m}} \mathbf{U}^{\text{m}}_{n}\big) + \big( \mathbf{F}^{n+1} - \mathbf{S}^{\text{m}} \tilde{\mathbf{U}}^{\text{m}}_{n+1} \big) \Big]
\end{align}
Note that the application of the mass-lumping strategy is not as straightforward as in the piecewise linear case, since in general it results in a singular mass matrix for the higher-order finite element. 
To overcome this, we follow the method proposed in~\cite{Cohen_2001} in the context of the wave equation. It is based on the enrichment of the cubic finite element space with three fourth-order polynomial bubble functions, which are uniformly equal to~$0$ at the elements' edges. Moreover, the mass matrix is assembled using a positive quadrature rule, which is exact for seventh-order polynomials, with quadrature points located in the nodal points of the enriched space.  Such a construction yields a diagonal matrix $\widetilde{\mathbf{M}}$ used in place of~$\mathbf{M}$ in Scheme~\eqref{eq:HOwithRK2}.

In Figure~\ref{fig:SchemesComparison}, we compare the accuracy and normalised computational times of the energy-corrected schemes with several commonly used methods for discretising the heat equation. We use a known exact solution $u = \sin(t) s_1 + \sin(2 t) s_2 - \sin(3t) s_3$ on the L-shaped domain $\Omega = (-1, 1)^2 \setminus \big( [0, 1]\times [-1, 0]\big)$ with the largest interior angle of size $\Theta = 3\pi/2$ and compare the $L^2(\Omega)$ and $L^2(\Omega')$ errors of the schemes at the last time step $T = 1$, where $\Omega' = \Omega \cap \{ |x | > 0.25\}$.
\begin{figure}
\centering
\begin{minipage}{0.49\textwidth}
\huge
\resizebox{\textwidth}{!}{
\includegraphics{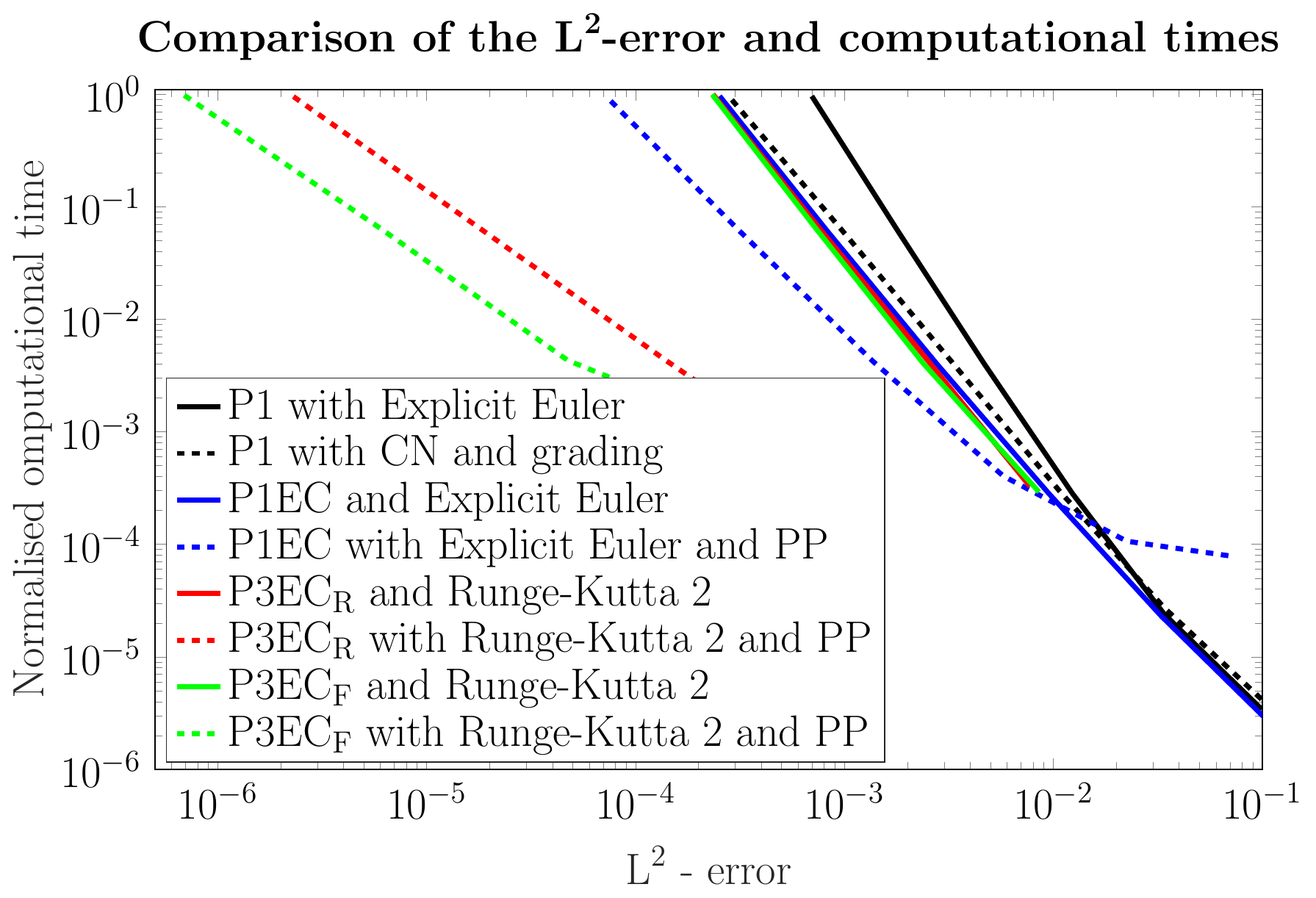}
}
\end{minipage}
\begin{minipage}{0.49\textwidth}
\huge
\resizebox{\textwidth}{!}{
\includegraphics{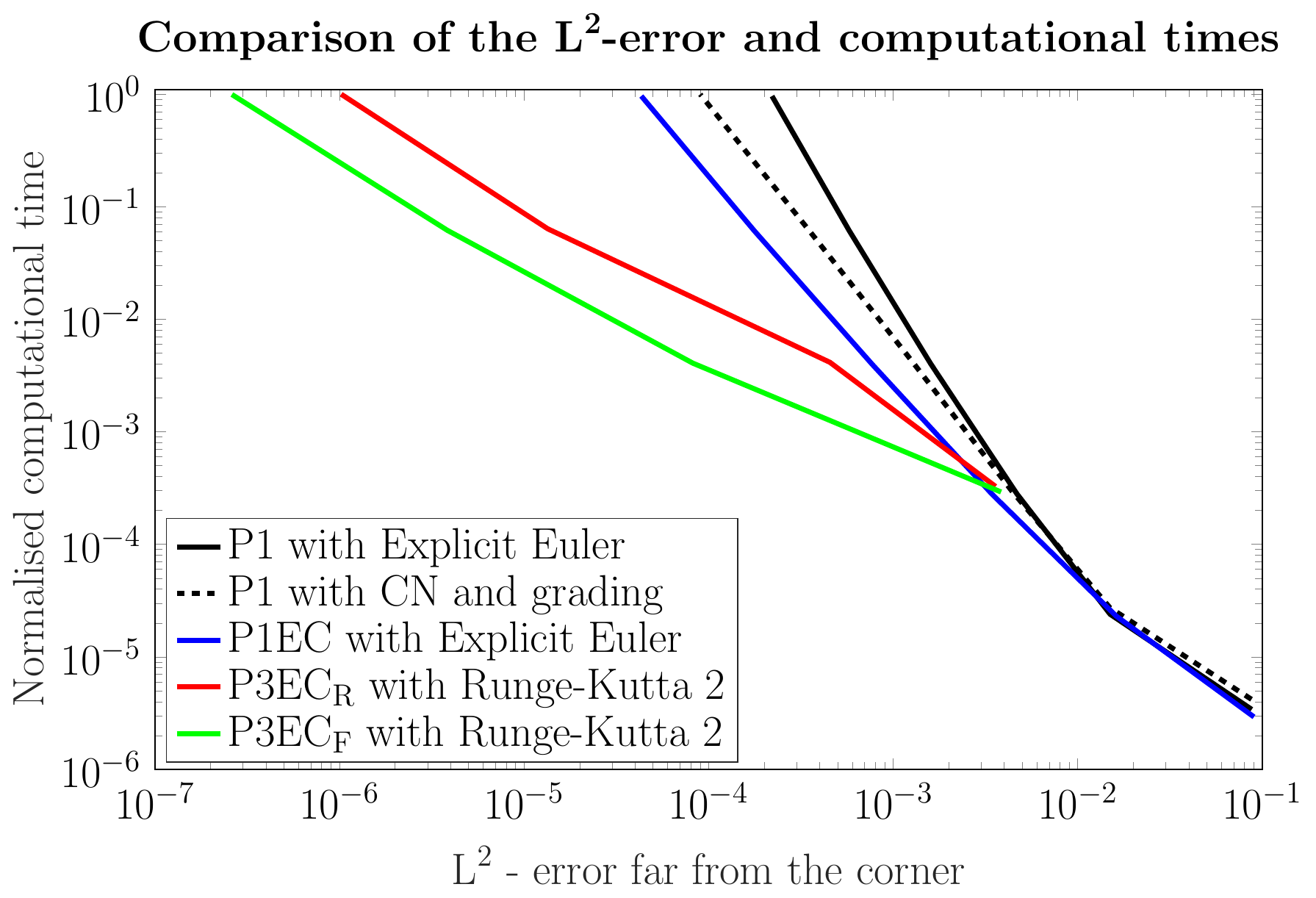}
}
\end{minipage} 
\caption{Comparison of the normalised computational time and accuracy of commonly used finite element discretisations of the parabolic problems with the proposed energy-corrected solvers on the L-shape domain. We compare the $L^2(\Omega)$ (left) and $L^2(\Omega')$ (right) error evaluated at the final time-step $T=1$.}\label{fig:SchemesComparison} 
\end{figure}

On the left-hand side of Figure~\ref{fig:SchemesComparison}, standard $L^2(\Omega)$ errors and normalised computational times are shown. The standard piecewise linear finite element combined with an Explicit Euler time-stepping and mass-lumping provides the worst results among the ones studied since its performance is limited by~\eqref{eq:ConvStFEMParabolic}. Initially $\Delta t = 0.1$ is chosen and with each space refinement the time step is divided by~$4$. 

Application of mesh grading techniques improves the performance of the solver~\cite{Chatzipantelidis2006}. However, the use of explicit time-stepping schemes is infeasible due to the very restrictive CFL condition. In order to recover the optimal convergence order in the $L^2(\Omega)$ norm, it is necessary to grade the mesh towards the singular corner, introducing elements of the size $h^{1/\mu}$, where $\mu < \pi / \Theta$. This in particular means that in the case of the L-shape domain with $\Theta = 3\pi / 2$, time-steps $\Delta t$ smaller than $\mathcal{O}(h^3)$ need to be used. Therefore, we use an unconditionally stable Crank-Nicolson scheme in time allowing for a coarser time discretisation. In order to keep the right balance between space and time discretisation errors, with each mesh refinement we divide the time-step by $2$, beginning with $\Delta t =0.1$.

Piecewise linear energy-corrected finite element scheme with Explicit Euler time-stepping~\eqref{eq:P1withEEDiscrete} yields significantly better results than the standard piecewise linear discretisations. It also gives comparable results with the mesh grading scheme completed with Crank-Nicolson time-stepping. An application of the post-processing additionally improves the accuracy of the method resulting in a better error-to-time ratio than the mesh grading method equipped with Crank-Nicolson time-stepping. We use in-built MATLAB linear system solvers. Note that the application of fast iterative solvers, such as multigrid methods, could additionally improve the performance of the relevant implicit methods.

Application of the cubic energy-corrected finite element scheme with second-order Runge-Kutta scheme in time gives similar results to the piecewise linear energy-corrected scheme. However, additional application of the post-processing yields the best results in terms of the balance between the computational time and the accuracy of the scheme out of all tested methods. This can be attributed to the use of the scheme eliminating the pollution effect in the solution, completion with the post-processing strategy yielding optimal convergence in the standard norms, and the use of mass-lumping strategy. Note that the use of $c_h^F(\cdot, \cdot)$ modification gives quantitatively better results than $c_h^R(\cdot, \cdot)$. This phenomenon was previously observed in~\cite[Section~6.3]{Horger2016HO} and can be attributed to the smaller modification subregion in the computational domain.

As shown in Section~\ref{Sec:FEM_parabolic}, energy-correction method gives optimal convergence rates in terms of the best-approximation property, however, in weighted norms. This, in particular, means that the method converges optimally when measured far from the re-entrant corner. Therefore, no additional post-processing needs to be applied, when one is interested in the solution far from the singular corner.

On the right-hand side of Figure~\ref{fig:SchemesComparison}, a comparison of $L^2(\Omega')$ errors and normalised computational times of the previously described methods are shown.	 Again, due to the pollution effect, the standard finite element discretisation results in the worst error-to-time ratio. It can be improved by the application of the mesh grading together with Crank-Nicolson time-stepping, which yields only slightly worse results than the piecewise-linear energy-corrected scheme. The cubic energy-corrected finite element, together with Heun's time-stepping and mass-lumping strategy, results in by far the best method when the $L^2$-error far from the re-entrant corner is concerned.
Small variations in the convergence rates in the cubic finite element scheme, when using $c^R_h(\cdot, \cdot)$ modification, appear because of insufficient initial resolution of the mesh. Again, modification $c^F_h(\cdot, \cdot)$ yields a better performance than $c^R_h(\cdot, \cdot)$.
\subsection{Application}
In this section, we apply the piecewise linear energy-corrected finite element method to a real 3D geometry of a graphite moderator brick of a nuclear power plant. Such a moderation type is commonly used in Advanced Gas-cooled Reactors (AGR)~\cite{Nonbol1996}. Efficient simulations of heat distribution in moderator bricks play an important role in the analysis of the material properties of the whole nuclear core, and accurate computations of temperature distribution can help determine the lifetime of nuclear materials, which often suffer from large temperature gradients and fast neutron fluxes~\cite{Arregui2016}.
\begin{figure}
    \centering
    \begin{subfigure}[b]{0.35\textwidth}
        \includegraphics[width=\textwidth]{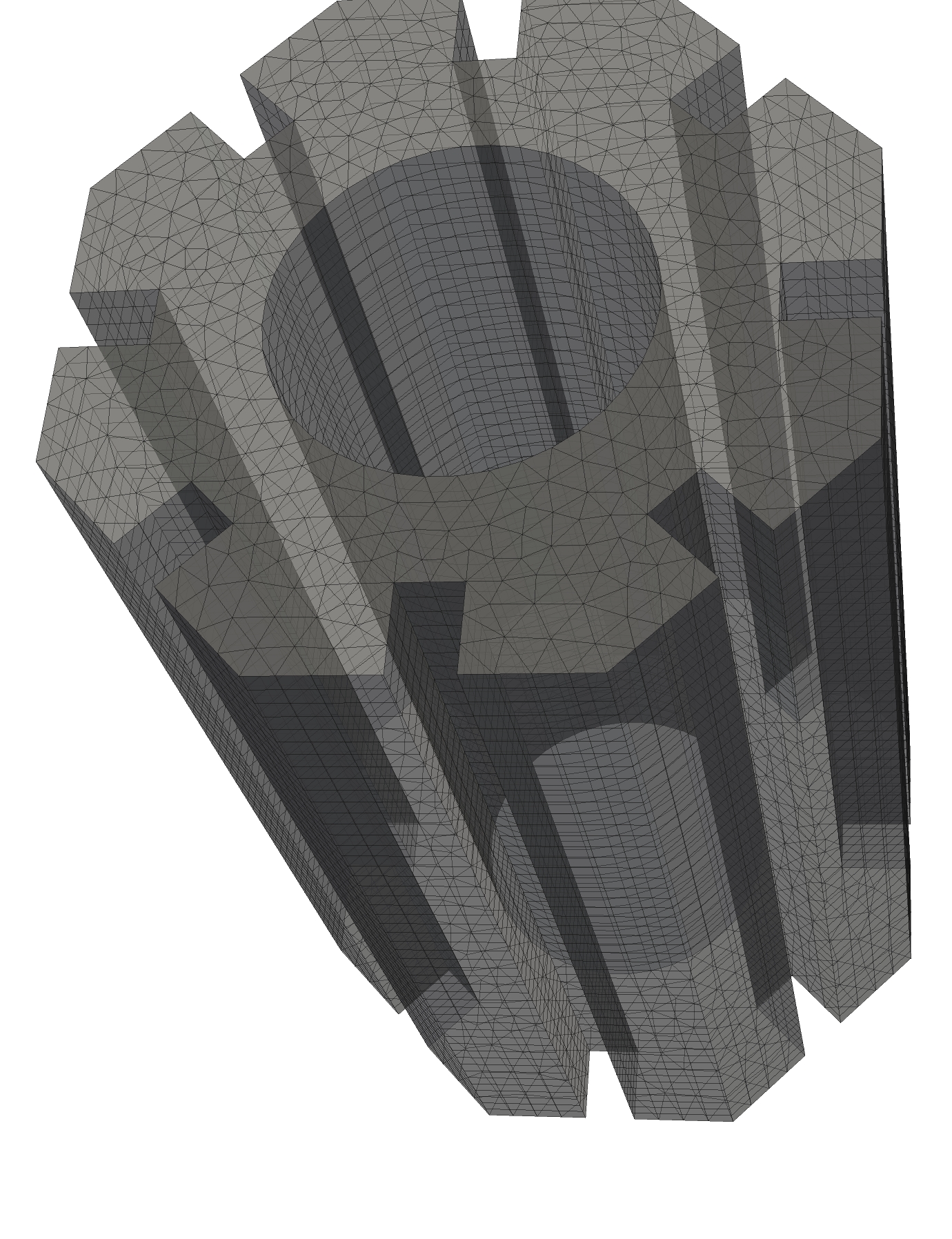}
        \caption{3D geometry}
        \label{fig:Geometry}
    \end{subfigure}
    ~ 
    \begin{subfigure}[b]{0.48\textwidth}
        \includegraphics[width=\textwidth]{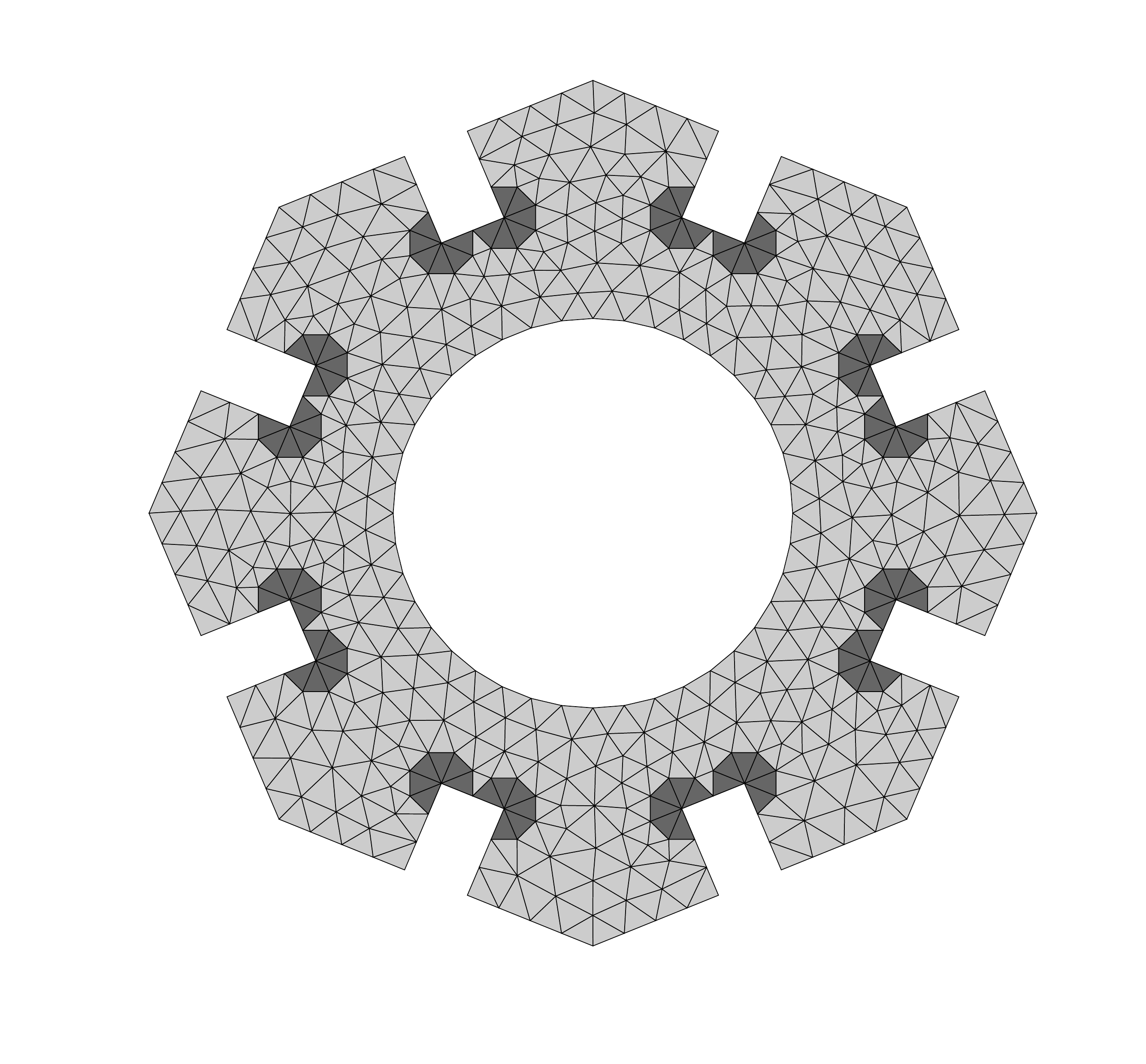}        
        \caption{Computational mesh of a cross-section}
        \label{fig:Mesh}
    \end{subfigure}
    \caption{Geometry and a computational mesh of a graphite moderator brick of a nuclear power plant. In Figure~\ref{fig:Geometry} a complete 3D domain is shown.}\label{fig:Fig1}
\end{figure}

The 3D geometry $\Omega$ of the graphite brick is presented in Figure~\ref{fig:Geometry}. The shape of the brick has a tensorial structure, with identical 2D horizontal cross-sections containing $16$ equally-sized re-entrant corners with angle~$\Theta = 3\pi/2$ at the external boundary. Heat transfer in a graphite moderator brick, in its simplest form, can be described by
\begin{align}\label{eq:HeatEquationNC}
u_t - \Delta u &= f \quad \text{in } \Omega\times (0, T),\\
u &= g \quad \text{on } \partial \Omega_1\times [0, T],\\
\partial_\nu u &= 0 \quad \text{on } \partial \Omega_2\times [0, T],\\
u &= 0 \quad \text{on } \partial \Omega_3\times [0, T],\\
u &= u_0 \quad \text{in } \Omega \text{ at } t = 0 .
\end{align}
Here, $\partial \Omega_1$ is the interior, cylindrical boundary of the domain and $g$ is the heating produced due to the nuclear reaction occurring in the fuel assembly. The system is thermally isolated from below on the $\partial \Omega_2$ part of the boundary, which is reflected by the uniform Neumann boundary conditions. Finally, the remaining part of the domain's boundary - $\partial\Omega_3$, is subject to a circulating coolant of a constant temperature. Note that the solution is rescaled, so that the temperature there is uniformly distributed.

Exploiting the tensorial structure of the domain $\Omega$, we divide it into prismatic elements of equal length $h_z$ in the vertical dimension. Moreover, each cross-section is triangulated as shown in Figure~\ref{fig:Mesh}. It is worth noting that around each of the $16$ re-entrant corners in the cross-sections, we use identical one-element patches consisting of congruent isosceles triangles. This, together with the tensorial structure of the mesh, allows us to reuse the parameter~$\gamma$ in the energy-corrected scheme~\eqref{eq:ECModification} once computed in the two-dimensional setting. Following~\eqref{eq:EETimeStepping}, we complete the finite element discretisation in space with the Explicit Euler time-stepping scheme.

\begin{figure}
    \centering
    \begin{subfigure}[b]{0.45\textwidth}
        \includegraphics[width=\textwidth]{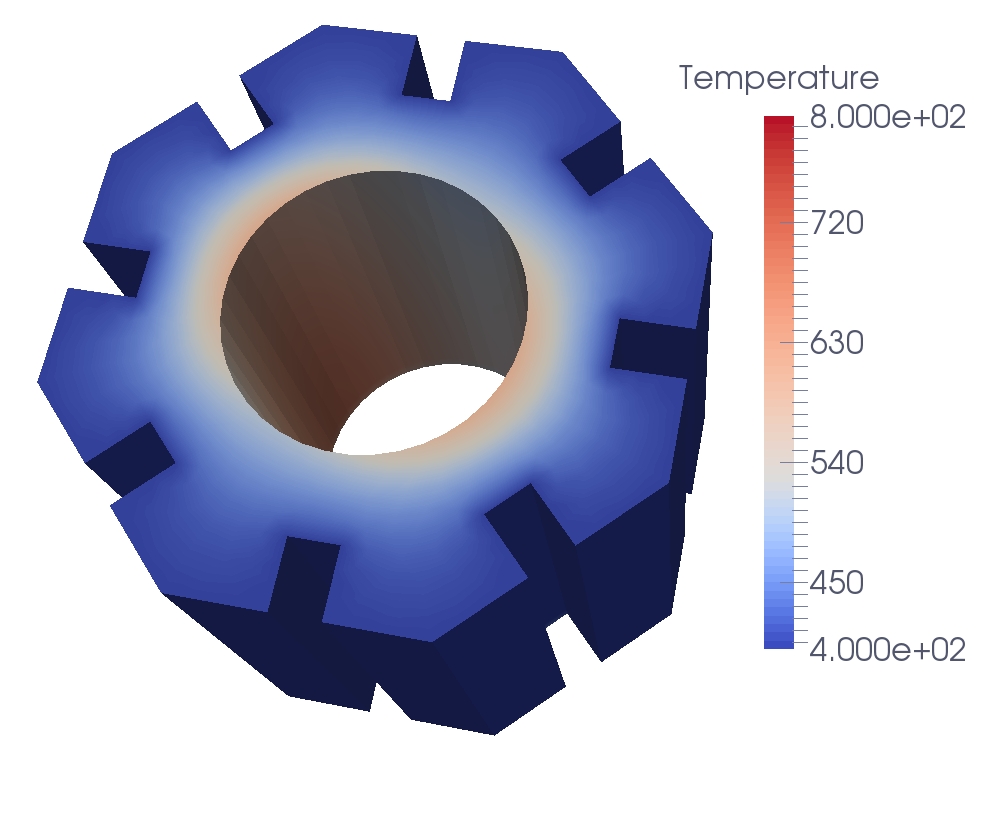}
        \caption{Temperature distribution at $t = 0.25$.}
        \label{fig:t025}
    \end{subfigure}
    ~ 
    \begin{subfigure}[b]{0.45\textwidth}
        \includegraphics[width=\textwidth]{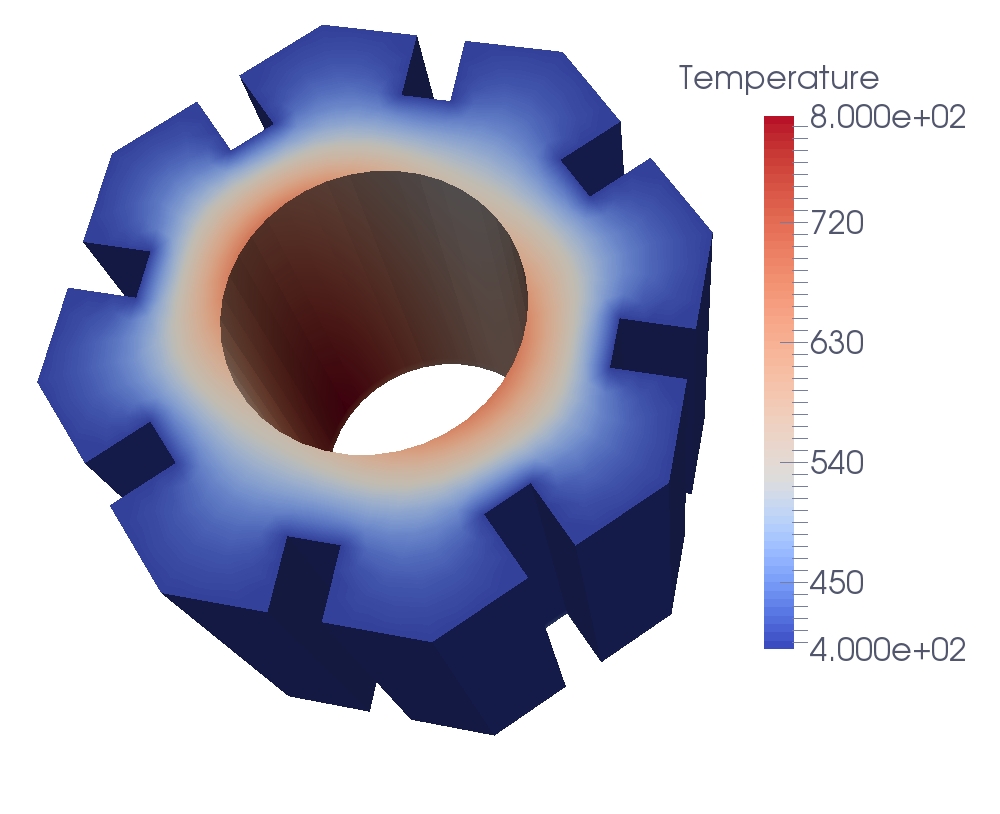}
        \caption{Temperature distribution at $t = 0.5$.}
        \label{fig:t05}
    \end{subfigure}
    \caption{Temperature distribution in the geometry of a graphite moderator brick at two different time points. The temperature is shown in a cross-section of a domain at the height $L/2 = 1.5$. We would like to point out the nonuniformity of the solution and lack of rotational symmetry due to choice of the boundary conditions on the internal, cylindrical wall.}\label{fig:Fig2}
\end{figure}
In the simulations, we choose a homogenuous initial temperature distribution $u_0 = 0$ and the heating on the internal boundary to be given by $g = 10 \cos \big( \pi z /(2 L) \big) \big( 1 + \sin(\phi + 4\pi t) \big) \sin(\pi t)$, where $L =3$ is the height of the domain $\Omega$ and $(r, \phi, z)$, $z\in [0, L]$ are cylindrical coordinates associated with the domain. The simulation is performed using $N= 5000$ time steps with the final time at $T=1$ and at the end of the simulation the solution is rescaled to a physically meaningful value range by $U = 20u + 400$. The solution at two intermediate time-steps is presented in the vertical cross-sections of the domain taken in the middle at the height $z =1.5$ at times $t = 0.25$ and $t = 0.5$.

To investigate the convergence of the scheme, we measure two different quantities of interest, namely the average temperature in the body at the final time-step and the average temperature in the whole space-time cylinder $\Omega\times [0, T]$
\begin{align*}
\text{QoI}_1 = \frac{1}{|\Omega|} \int_\Omega u_h(T, x) \; \mathrm{d}x, \quad \text{QoI}_2 = \frac{1}{T|\Omega|} \int_0^T \int_\Omega u_h(t, x) \; \mathrm{d}x\; \mathrm{d}t.
\end{align*}
In order to investigate the convergence properties of the quantities of interest, we perform the computations on four different refinement levels. Initially, we use the mesh described above, which is then uniformly refined and the time-step size is divided by~$4$. The results of the computations are presented in Figure~\ref{fig:Fig3}. We compare the accuracy of the standard finite element with the energy-corrected finite element method. Together with the quantities of interest $\text{QoI}_1$ and $\text{QoI}_2$, we include also the extrapolated values $\text{QoI}^{ex}_1$ and $\text{QoI}^{ex}_2$ in the plots. As presented in Figure~\ref{fig:Fig3}, the application of energy-correction improves the approximation properties of the quantities of interest. Moreover, in the case of the standard finite element approximation the respective estimated orders of convergence of $|\text{QoI}^{ex}_1 - \text{QoI}_1|$ and $|\text{QoI}^{ex}_2 - \text{QoI}_2|$ are $1.37$ and $1.23$. The application of the energy-correction improves these orders and yields estimated values of $2.1$ and $2.12$, respectively.  

\begin{figure}[h]
\begin{minipage}{0.48\textwidth}
\huge
\resizebox{\textwidth}{!}{
\includegraphics{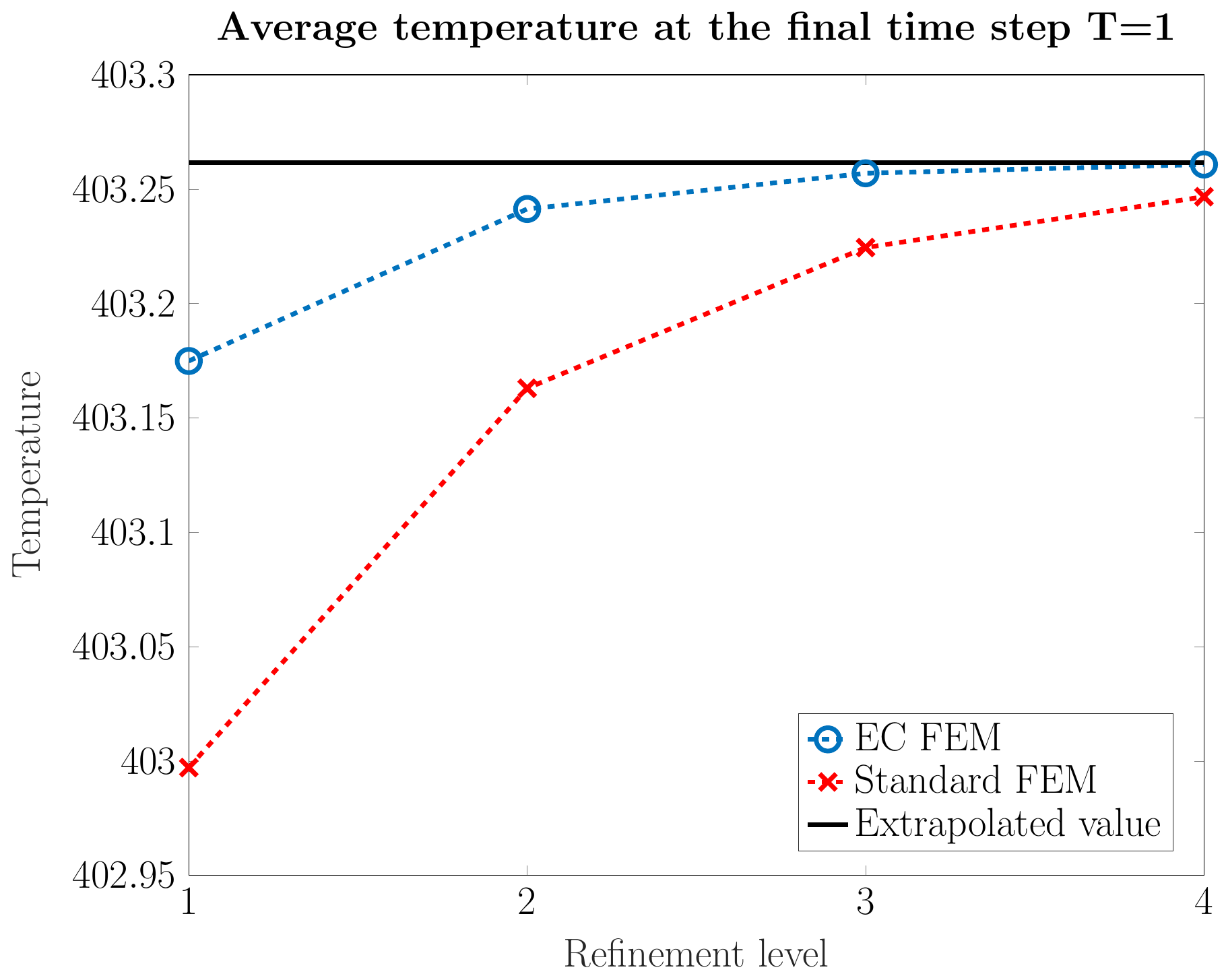}
}
\end{minipage}
\begin{minipage}{0.48\textwidth}
\huge
\resizebox{\textwidth}{!}{
\includegraphics{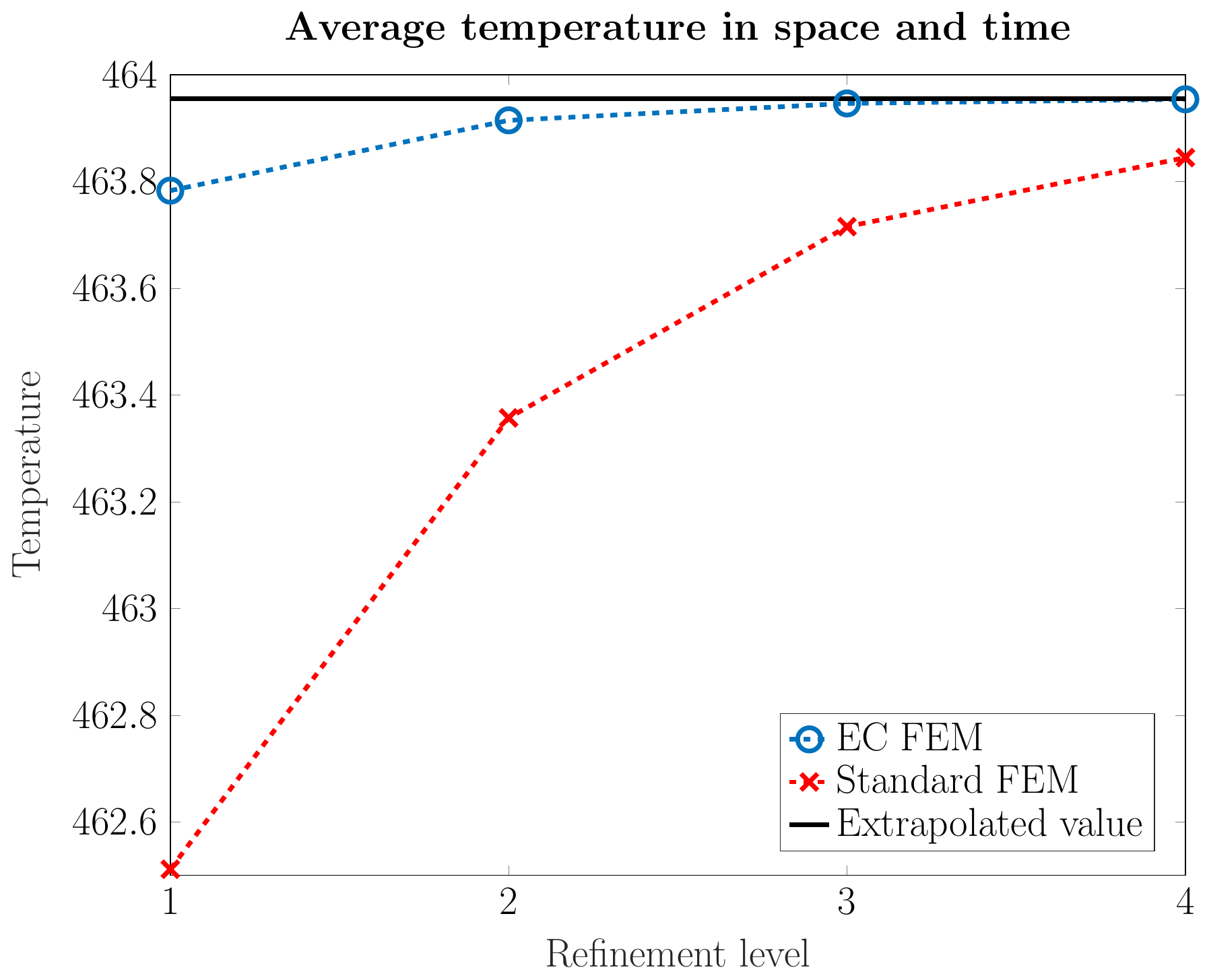}
}
\end{minipage} 
\caption{Comparison of the approximations of $\text{QoI}_1$ (left) and $\text{QoI}_2$ (right) obtained using standard and energy-corrected finite element scheme on four consecutive refinement levels in the space-time domain. Additionaly, extrapolated value of both quantities of interest are added to the plots for comparison.}\label{fig:Fig3}
\end{figure}
\section{Concluding remarks}
In this article, we proposed an energy-corrected finite element discretisation for parabolic problems on non-convex polygonal domains. We showed rigorously that the pollution effect inherited from elliptic problems can also be eliminated in the case of time-dependent problems, resulting in the optimal order of convergence in the sense of the best approximation property. The use of uniform meshes in the energy-correction method leads to a less restrictive CFL condition compared to standard methods based on mesh grading and adaptivity. Further, this allows for the application of explicit time-stepping schemes and creation of fast numerical schemes based on them.

Moreover, we proposed a post-processing approach and showed how the higher-order energy-corrected scheme can be combined with explicit Runge-Kutta type discretisation in time. This, together with the mass-lumping techniques, results in efficient solvers of parabolic problems for both piecewise linear and piecewise polynomial finite elements. Finally, we showed that the proposed algorithm can be successfully applied to 3D geometries with multiple re-entrant corners.

\section*{Acknowledgements}
We gratefully acknowledge the support of the German Research Fundation (DFG) through the grant WO 671/11-1 and, together with the Austrian Science Fund, through the IGDK1754 Training Group.

\bibliographystyle{plain}
\bibliography{library}
\end{document}